\documentclass[11pt]{article}
\usepackage[T1]{fontenc}
\usepackage{hyperref, amsfonts, amsmath, amssymb, amsthm, fullpage, mathtools, microtype, enumitem, tikz, mleftright}
\usepackage[noabbrev, capitalize]{cleveref}

\title{Bounds on median eigenvalues of graphs of bounded degree}
\author{
    Hricha Acharya\thanks{School of Mathematical and Statistical Sciences, Arizona State University, Tempe, AZ 85281, USA. Email: {\tt \{hachary3, zilinj\}@asu.edu}.}
    \and Zilin Jiang\footnotemark[1]${\ }^{,}$\thanks{School of Computing and Augmented Intelligence, Arizona State University, Tempe, AZ 85281, USA. Supported in part by the Simons Foundation through its Travel Support for Mathematicians program and by U.S.\ taxpayers through NSF grant 2451581.}
    \and Shengtong Zhang\thanks{Department of Mathematics, Stanford University, Stanford, CA 94305, USA. Email: {\tt stzh1555@stanford.edu}}
}
\date{}

\newtheorem{theorem}{Theorem}[section]
\newtheorem{corollary}[theorem]{Corollary}
\newtheorem{lemma}[theorem]{Lemma}
\newtheorem{proposition}[theorem]{Proposition}

\newtheorem{conjecture}[theorem]{Conjecture}

\theoremstyle{definition}

\theoremstyle{remark}
\newtheorem*{remark}{Remark}

\DeclarePairedDelimiter{\abs}{\lvert}{\rvert}
\DeclareMathOperator{\tr}{tr}

\newcommand{\sset}[1]{\{#1\}}
\newcommand{\dset}[2]{\sset{#1 \colon #2}}
\newcommand{\lah}{\lambda_h}
\newcommand{\lal}{\lambda_\ell}

\newcommand{\eps}{\varepsilon}
\newcommand{\avge}{\varepsilon(G)}
\newcommand{\N}{\mathbb{N}}
\newcommand{\dbar}{\overline{d}}
\newcommand{\quadand}{\quad\text{and}\quad}

\newcommand{\EE}{\operatorname{E}}

\begin{document}

\maketitle

\begin{abstract}

We prove that for every integer $d \ge 3$, the median eigenvalues of any graph of maximum degree $d$ are bounded above by $\sqrt{d-1}$. We also prove that, in three separate cases, the median eigenvalues of a graph of maximum degree $d$ are bounded below by $-\sqrt{d-1}$: when the graph is triangle-free, when $d-1$ is a perfect square, or when $d \ge 75$. These results resolve, for all but finitely many values of $d$, an open problem of Mohar on median eigenvalues of graphs of maximum degree $d$. As a byproduct, we establish an upper bound on the average energy of graphs of maximum degree at most $d$, generalizing a previous result of van Dam, Haemers, and Koolen for $d$-regular graphs.

\end{abstract}

\section{Introduction} \label{sec:intro}

Spectral graph theory has traditionally focused on extremal portions of the spectrum, particularly the largest eigenvalue and, more generally, the leading eigenvalues. In contrast, eigenvalues near the center of the spectrum have received much less attention. Among these, the \emph{median eigenvalues} form a natural and important object of study, motivated both by intrinsic spectral questions and by applications arising from H\"uckel molecular orbital theory in chemistry \cite{LLSG13}.
Let $G$ be a simple graph of order $n$, and let $\lambda_1 \ge \lambda_2 \ge \cdots \ge \lambda_n$ denote the eigenvalues of its adjacency matrix $A_G$. The \emph{median eigenvalues} of $G$ are the eigenvalues $\lah$ and $\lal$, where 
\[
    h = \lfloor (n+1)/2\rfloor \quadand \ell = \lceil (n+1)/2\rceil.
\]

The systematic study of median eigenvalues was initiated by Fowler and Pisanski~\cite{FP10-b,FP10-a}, who conducted computational experiments on graphs of maximum degree at most three, also known as \emph{subcubic graphs}. They conjectured that, apart from finitely many exceptions, every subcubic graph has its median eigenvalues in the interval $[-1,1]$.

Subsequent work of Mohar~\cite{M13,M16} confirmed the conjecture of Fowler and Pisanski for \emph{bipartite} subcubic graphs, and identified the Heawood graph, namely the incidence graph of the Fano plane, as the unique exception, with median eigenvalues $\pm\sqrt{2}$. Recently Acharya, Jeter, and Jiang~\cite{AJJ25} completely resolved the conjecture, proving that every subcubic graph, except for the Heawood graph, has its median eigenvalues in $[-1,1]$.

With the subcubic case settled, we turn to graphs of maximum degree $d$ for a general integer $d$. Mohar~\cite{M15} proved that the median eigenvalues of any graph of maximum degree $d$ are at most $\sqrt d$ in absolute value, and conjectured that $\sqrt d$ could be improved to $\sqrt{d-1}$. Mohar also pointed out that the median eigenvalues of the incidence graph of a projective plane of order $d-1$ are equal to $\pm\sqrt{d-1}$; hence the bound $\sqrt{d-1}$ would be optimal whenever $d-1$ is a prime power.

In this paper, we confirm Mohar's conjecture for all but finitely many values of $d$. We begin with the upper bound on the median eigenvalues.

\begin{theorem} \label{thm:upper_bound}
    For every integer $d \ge 3$, the median eigenvalues of any graph of maximum degree $d$ are at most $\sqrt{d-1}$.
\end{theorem}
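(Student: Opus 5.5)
Write $\lambda_1 \ge \cdots \ge \lambda_n$ for the eigenvalues of $A = A_G$, where $G$ has maximum degree $d$. It suffices to show $\lambda_h \le \sqrt{d-1}$, since $\lambda_\ell \le \lambda_h$. Suppose for contradiction that $\lambda_h > \sqrt{d-1}$. Then at least $h \ge n/2$ eigenvalues exceed $\sqrt{d-1}$.

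The plan is to exploit trace identities for a shifted matrix whose negative spectrum is controlled. The natural candidate is $M = A^2 - (d-1)I$, possibly combined with a linear term in $A$. Its diagonal entries are $\deg(v) - (d-1) \le 1$, so $\tr M \le n$. Every eigenvalue $\lambda_i$ with $\abs{\lambda_i} > \sqrt{d-1}$ contributes a positive amount $\lambda_i^2-(d-1)$. The remaining eigenvalues contribute at least $-(d-1)$ each.

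This alone is too weak, so I would use a test polynomial. Choose $p$ such that
\begin{itemize}
\item $p(x) \ge c$ for $x > \sqrt{d-1}$;
\item $p(x) \ge -c$ for $x \in [-d, \sqrt{d-1}]$;
\item $\tr p(A) \le 0$, checked combinatorially.
\end{itemize}
Then $0 \ge \tr p(A) \ge c\cdot h - c\,(n-h) \ge 0$, with strictness somewhere giving the contradiction.

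A concrete attempt is $p(x) = x^2 - (d-1) + a x$, whose trace is $\sum_v \deg v - n(d-1) \le n$. That trace is positive, so instead I would use a cubic such as $p(x) = (x+t)(x^2-(d-1))$, or similar. Its trace is
\[
\tr A^3 + t\,\tr A^2 - (d-1)\,t\,n = 6T + t\sum_v \deg v - (d-1)tn,
\]
where $T$ is the number of triangles, and this trace involves local counts. The cleanest route, however, is probably \emph{local}: work vertex by vertex with the spectral measure $\mu_v$ of $A$ at $v$, and show that $\sum_v \mu_v\bigl((\sqrt{d-1},\infty)\bigr) < n/2$. In other words, show that the average over $v$ of the spectral mass above $\sqrt{d-1}$ is below $1/2$. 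Averaging local weights this way is exactly the method used by Mohar for the bound $\sqrt d$. There one uses $\tr(A^2) = \sum_v \deg v \le dn$, together with the observation that if more than half the eigenvalues have square exceeding $d$, then the sum of squares exceeds $dn$ once paired with the negative side. For the upper median alone, we need mass above $\sqrt{d-1}$ on more than half the spectrum, which forces $\sum_{\lambda_i>0} \lambda_i > (n/2)\sqrt{d-1}$.

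The key step is therefore to bound the \emph{positive energy}
\[
\sum_{\lambda_i>0}\lambda_i = \tfrac12\sum_i\abs{\lambda_i},
\]
using $\sum_i \lambda_i = 0$. This connects to the abstract's average energy byproduct: I would aim to prove that the energy satisfies $\mathcal{E}(G) \le n\sqrt{d-1}$, with strictness unless $G$ has the projective-plane incidence structure. Combined with the observation above, strict inequality yields the contradiction.

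For the energy bound, I would write
\[
\mathcal{E}(G) = \sum_v \int \abs{x}\, d\mu_v(x)
\]
and bound each local term $\int \abs{x}\,d\mu_v$ by a polynomial majorant $\abs{x} \le q(x)$ on $[-d,d]$. Here $q$ is a quadratic or quartic tangent to $\abs{x}$ near $\pm\sqrt{d-1}$, and the local moments of $\mu_v$ are the closed-walk counts $(A^2)_{vv} = \deg v$ and $(A^4)_{vv}$. The main obstacle is that $(A^4)_{vv}$ depends on more than the degree, through 4-cycles and neighbours' degrees. One needs to choose $q$ so that the worst case over all local configurations gives exactly $\sqrt{d-1}$, a configuration-by-configuration optimization that I expect to be the hard part. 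Low-degree vertices must also be handled by a charging argument.
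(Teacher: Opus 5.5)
\textbf{There is a genuine gap.} The step you make key, $\mathcal{E}(G) \le n\sqrt{d-1}$, is false, and it fails at exactly the graphs you expected to be the equality case. The incidence graph of a projective plane of order $d-1$ has $n=2(d^2-d+1)$ vertices and eigenvalues $\pm d$ (once each) and $\pm\sqrt{d-1}$ (each with multiplicity $d^2-d$). Its average energy is therefore $\sqrt{d-1}+1/(d+\sqrt{d-1}) > \sqrt{d-1}$. For $d=3$, the Heawood graph has energy $6+12\sqrt2 > 14\sqrt2$, and $K_4$ has energy $6>4\sqrt2$. In fact $\sqrt{d-1}+1/(d+\sqrt{d-1})$ is the sharp maximum of the average energy (\cref{thm:avg_energy}). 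So going through energy and \cref{thm:median_energy_bound} can give at best \cref{cor:median_energy_bound}, never $\sqrt{d-1}$. The obstruction is structural. A first absolute moment cannot detect that the projective-plane spectrum has only half its mass at or above $\sqrt{d-1}$: the atoms at $\pm d$ inflate $\sum_i\abs{\lambda_i}$ without moving the median.

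Your first idea, the test polynomial $p$ with $p\ge c$ above $\sqrt{d-1}$, $p\ge -c$ below, and $\tr p(A)\le 0$, is in fact the paper's scheme. You abandoned it at degree three, where it cannot succeed. Consider a distribution with mass just over $1/2$ on $(\sqrt{d-1},d]$, mostly just above $\sqrt{d-1}$ plus a tiny atom at $d$, and the rest on one balancing negative atom. It has mean $0$, second moment below $d$, and small positive third moment, so no certificate built from the first three moments and the support $[-d,d]$ can exclude it.

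The missing ingredient is the fourth-moment lower bound $\sum_i\lambda_i^4 \ge \sum_v(2d_v^2-d_v)$, obtained by counting closed walks $ababa$, $abaca$, $abcba$. This is a lower bound on $(A^4)_{vv}$ in terms of degrees alone. So the configuration dependence you feared is irrelevant once the polynomial has negative leading coefficient.

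The paper takes $f(x)=(\alpha-x)(x+\sqrt{d-1})^2(x+d)$, with $\alpha>d$ fixed by $f(\sqrt{d-1})=f(d)$. This $f$ has the following properties:
\begin{itemize}
\item $f\ge 0$ on $[-d,d]$ and $f>f(d)$ on $(\sqrt{d-1},d)$.
\item Its $x^3$-coefficient is negative, so the term $\tr A^3\ge 0$ can be dropped.
\item The moment bounds, with $d\ge 3$ used to handle $\overline{d}\le d$, give $\frac1n\sum_i f(\lambda_i)\le \frac12 f(d)$. This is exactly the average of $f$ over the projective-plane spectrum.
\end{itemize}
Your $p$ and $c$ are then $f-\frac12 f(d)$ and $\frac12 f(d)$ on $[-d,d]$.

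The strictness you left vague comes from Perron--Frobenius. Equality would force half of the eigenvalues to equal $d$, but $d$ has multiplicity at most $n/(d+1)$.
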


We then prove the lower bound on the median eigenvalues in three separate cases.

\begin{theorem} \label{thm:lower_bound_triangle_free}
    For every integer $d \ge 3$, the median eigenvalues of any triangle-free graph of maximum degree $d$ are at least $-\sqrt{d-1}$.
\end{theorem}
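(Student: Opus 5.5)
Equivalently, we show that $G$ has at most $\lfloor n/2\rfloor$ eigenvalues strictly below $-\sqrt{d-1}$, since $\lal \ge \lah$ fails only if $\lal<-\sqrt{d-1}$. The plan is a trace argument with a polynomial $p$. We choose $p$ so that $p(x)\ge 1$ whenever $x<-\sqrt{d-1}$, and so that $p(x)$ is bounded below on the rest of the spectral range $[-d,d]$. We then compare $\sum_i p(\lambda_i)=\tr p(A_G)$ with the count of eigenvalues below $-\sqrt{d-1}$.

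\textbf{Choice of $p$.} We take $p(x)=x^2-c\,x^3/?$. A more natural choice is a quadratic form in $A$ and $A^2$ whose trace is computable from local data. Triangle-freeness gives $\tr A^3=0$. We also have $\tr A^2=\sum_v d(v)=2e$. In addition, $\tr A^4=\sum_v d(v)^2+\sum_v d(v)(d(v)-1)+8C_4$, and the first two terms are the closed walks that backtrack. Consider the cubic $q(x)=-(x^3 - (d-1)x)\cdot$ sign. For $x<-\sqrt{d-1}$ we have $x^3-(d-1)x<0$. More precisely, $x(x^2-(d-1))<0$ exactly when $x<-\sqrt{d-1}$ or $0<x<\sqrt{d-1}$. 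We therefore use $f(x)=x^3-(d-1)x$. Its trace is $\tr f(A)=\tr A^3-(d-1)\tr A^2=-(d-1)\cdot 2e$, which is very negative, so this naive choice is insufficient.

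\textbf{Refined choice.} A better route uses the Rayleigh-quotient / interlacing formulation. Suppose $k$ eigenvalues are below $-\sqrt{d-1}$, and let $U$ be their span. Then $x^{T}(A+\sqrt{d-1}I)x<0$ on $U$. We aim to exhibit a subspace $W$ of dimension $\ge \lceil n/2\rceil$ on which $A+\sqrt{d-1}\,I\succeq 0$, which forces $k\le n-\lceil n/2\rceil$. Following Mohar's approach, we use $A^2$ instead. In a triangle-free graph, $A^2-(d-1)I$ restricted appropriately relates to the non-backtracking structure. Concretely, we would use the matrix $M=A^2+\sqrt{d-1}A$ together with the inequality $A^2\preceq$ (diagonal degrees) $+$ (paths of length two). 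We would attempt to prove $\sum_i g(\lambda_i)\ge 0$ for $g(x)=(x+\sqrt{d-1})(x-a)$ with suitable $a$, exploiting $\tr A=0$, $\tr A^2=2e\le dn$, and $\tr A^3=0$. With $g(x)=x^3+\alpha x^2+\beta x$ and $\tr g(A)=2\alpha e$, we pick $\alpha,\beta$ such that $g(x)\le -C$ for $x\in[-d,-\sqrt{d-1})$ and $g(x)\ge$ something on the rest of $[-d,d]$. Because $\tr A^3=0$, the cubic term is free, and that is the point where triangle-freeness enters.

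\textbf{Main obstacle.} The hardest step is finding $\alpha,\beta$, possibly with a quartic term controlled via $\tr A^4\le n d(2d-1)+8C_4$ and a $C_4$ bound, so that the weighted count closes exactly at the threshold $n/2$. The bound must be tight for incidence graphs of projective planes. In those graphs the spectrum is $\pm d$ and $\pm\sqrt{d-1}$, so $g$ must vanish, or be extremal, at $\pm\sqrt{d-1}$ and $\pm d$. I would first try $g(x)=(x^2-(d-1))(x-d)\cdot$const plus a multiple of $x$. This choice is forced by those four roots, and I would check the sign conditions.
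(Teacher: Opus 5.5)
Your proposal is a list of candidate strategies rather than a proof, and the route you settle on cannot work. You try an unfinished $p(x)=x^2-cx^3/?$, then the cubic $x^3-(d-1)x$ (which you reject yourself), then an undeveloped subspace argument, and finally leave the choice of coefficients as the ``main obstacle''. The cubic plan $g(x)=x^3+\alpha x^2+\beta x$ fails for every $\alpha,\beta$. The only information it uses is that the spectrum lies in $[-d,d]$, $\EE\lambda=\EE\lambda^3=0$ and $\EE\lambda^2\le d$, and this is consistent with more than half the mass lying below $-\sqrt{d-1}$. For $d=3$, take the following distribution:
\begin{itemize}
\item mass $0.5$ at $-1.42$ and mass $0.01$ at $-3$;
\item with $M_1=0.5\cdot1.42+0.01\cdot3=0.74$ and $M_3=0.5\cdot1.42^3+0.01\cdot27\approx1.7016$, mass $q=M_1^{3/2}M_3^{-1/2}\approx0.488$ at $b=(M_3/M_1)^{1/2}\approx1.516$;
\item the remaining mass, about $0.002$, at $0$.
\end{itemize}
It has mean $0$, third moment $0$ and second moment $\approx2.22<3$, yet mass $0.51$ below $-\sqrt2$. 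The same perturbation of $\tfrac12\delta_{-a}+\tfrac12\delta_a$, with $a$ slightly above $\sqrt{d-1}$, works for every $d\ge3$. So any argument using only those constraints cannot conclude, and the fourth moment is indispensable.

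The missing idea is to use the fourth moment with the opposite sign to the one you suggest. Writing $\tr A^4\le nd(2d-1)+8C_4$ requires an upper bound on $4$-cycles, which triangle-freeness does not provide: in $K_{d,d}$ one has $8C_4=nd(d-1)^2$. What is available for free is the lower bound $\tr A^4\ge\sum_v(2d_v^2-d_v)\ge(2\dbar^2-\dbar)n$ from backtracking walks. Hence the quartic coefficient must be negative.

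The paper takes $p(\lambda)=(\alpha+\lambda)(\sqrt{d-1}-\lambda)^2(d-\lambda)$, with $\alpha>d$ chosen so that $p(-\sqrt{d-1})=p(-d)$. Then $p\ge0$ on $[-d,d]$ and $p>p(-d)$ on $(-d,-\sqrt{d-1})$. Your interpolation conditions are also wrong. The forced shape is: threshold value at $-d$ and at $-\sqrt{d-1}$, a double root at $+\sqrt{d-1}$, and a root at $d$. That is four homogeneous conditions, which no nonzero cubic meets, and $p$ does not vanish at $-\sqrt{d-1}$.

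The rest of the argument runs as follows:
\begin{itemize}
\item Triangle-freeness removes the cubic term.
\item The $x^2$-coefficient $d^2+d-1$ makes the bound increasing in $\dbar\le d$.
\item The resulting bound equals $\EE p(\mu)=\tfrac12p(-d)$, where $\mu$ is uniform on the spectrum of a projective-plane incidence graph.
\end{itemize}

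Finally, your reformulation is off by one. For even $n$, $\lal<-\sqrt{d-1}$ already holds when exactly $n/2$ eigenvalues lie below $-\sqrt{d-1}$, so you must show strictly fewer than $n/2$. The paper handles this through the equality case. Equality would force $-d$ to have multiplicity $n/2$, which is impossible: each $d$-regular bipartite component contributes one such eigenvalue and has at least $2d$ vertices.
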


\begin{theorem}\label{thm:lower_bound_d-1_perfect_square}
    For every integer $d \ge 2$ such that $d-1$ is a perfect square, the median eigenvalues of any graph of average degree at most $d$ are at least $-\sqrt{d-1}$.
\end{theorem}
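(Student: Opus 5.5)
The plan is to combine the two trace constraints available for a graph of average degree at most $d$ with an arithmetic constraint. Put $k=\sqrt{d-1}\in\N$ and $A=A_G$, with eigenvalues $\lambda_1\ge\cdots\ge\lambda_n$. The trace constraints are
\[
\textstyle\sum_i \lambda_i = \tr A = 0, \qquad \sum_i\lambda_i^2=\tr A^2 = 2e(G)\le dn = (k^2+1)n .
\]
The statement $\lambda_\ell\ge -k$ fails exactly when the set $S=\dset{i}{\lambda_i<-k}$ has size $\abs{S}\ge n-\ell+1 = \lceil n/2\rceil$, i.e.\ $\abs{S}\ge n/2$. So we assume this and aim for a contradiction.

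First I check that the traces alone fall just short. By Cauchy--Schwarz, $\sum_{i\in S}\lambda_i^2>k^2\abs{S}$. Let $T$ be the set of the remaining $n-\abs{S}$ indices. Since $\sum_{i\in T}\lambda_i=-\sum_{i\in S}\lambda_i>k\abs{S}$, Cauchy--Schwarz again gives $\sum_{i\in T}\lambda_i^2> k^2\abs{S}^2/(n-\abs{S})\ge k^2\abs{S}$. Hence $\tr A^2>2k^2\abs{S}\ge k^2 n$, which misses the needed bound $(k^2+1)n$ by exactly $n$. So some extra input worth about one unit of $\lambda^2$ per vertex is required. This is where integrality of $k$ should enter, and it is the main obstacle.

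The extra input I would use is arithmetic. Since $k$ is an integer, $A+kI$ is an integer matrix.
\begin{itemize}
\item If $-k$ is an eigenvalue, factor out $(x+k)^{m}$ from the characteristic polynomial $\chi_A$. These eigenvalues lie in neither $S$ nor the strict inequalities, so they only help; they are handled by the same argument applied to the remaining factor.
\item The remaining eigenvalues are the roots of the monic integer polynomial $\chi_A(x)/(x+k)^m$. The product of $(\lambda+k)$ over these roots is $\pm$ its value at $x=-k$, which is a nonzero integer.
\end{itemize}
Therefore
\[
\textstyle\sum_{\lambda_i\neq -k}\log\abs{\lambda_i+k}\ \ge\ 0 .
\]
The plan is to find constants $\alpha,\beta,\gamma,\tau$ with $\tau>0$ such that, for all real $x\ne -k$,
\[
\log\abs{x+k}\ \le\ \alpha x^2+\beta x+\gamma-\tau\,\mathbf 1[x<-k],
\]
and also $\gamma\le 0$ so that the terms with $\lambda_i=-k$ contribute nonnegatively. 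Summing over the spectrum and using the two trace constraints gives
\[
0\le \alpha(k^2+1)n+\gamma n-\tau\abs{S}.
\]
This yields $\abs{S}<n/2$ provided the constants satisfy $\alpha(k^2+1)+\gamma<\tau/2$. The right choice is presumably a tangent-type quadratic majorant of $\log\abs{x+k}$. Its singularity at $x=-k$ makes a drop of $\tau$ on the left side cheap near $-k$, and the quadratic growth must dominate $\log\abs{x+k}$ for large $\abs{x}$.

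If a single quadratic does not close the gap, the fallback is to combine the arithmetic inequality with the Cauchy--Schwarz bounds above. In that version, $\sum_{i\in S}\log\abs{\lambda_i+k}$ is used to force the eigenvalues in $S$ to lie quantitatively below $-k$, gaining roughly one extra unit of $\lambda^2$ per index in $S$, and concavity of $\log$ on $T$ controls the compensating terms there. The delicate step in either version is optimizing these constants so that the gain is strictly more than the missing $n$ in $\tr A^2$.
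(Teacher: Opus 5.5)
Your plan has a genuine gap. It uses only three facts about $A$: $\tr A = 0$, $\tr A^2 \le dn$, and that $\prod_{\lambda_i \neq -k}(\lambda_i + k)$ is a nonzero integer. All three are also satisfied by a spectrum that violates the conclusion, so neither the quadratic majorant nor the fallback built on the same facts can reach a contradiction.

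Take $M = \bigl(\begin{smallmatrix} k & 1 \\ 1 & -k \end{smallmatrix}\bigr)$, or a direct sum of copies of it. This integer symmetric matrix has $\tr M = 0$ and $M^2 = dI$, so $\tr M^2 = dn$. Its characteristic polynomial is $x^2 - d$, its eigenvalues are $\pm\sqrt d$, and $\abs{(\sqrt d + k)(k - \sqrt d)} = 1$. Yet its lower median eigenvalue is $-\sqrt d < -k$.

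Concretely, the certificate you seek does not exist. Evaluate your pointwise inequality at $x = \sqrt d$ and at $x = -\sqrt d < -k$, and add. This gives $0 = \log\bigl((\sqrt d + k)(\sqrt d - k)\bigr) \le 2\alpha d + 2\gamma - \tau$, that is, $\alpha(k^2+1) + \gamma \ge \tau/2$ for every admissible choice of constants. In the language of your first paragraph, the spectrum $\sset{\pm\sqrt d}$ shows that integrality can make up the missing $n$ in $\tr A^2$ only with equality, never strictly.

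What is missing is an input that uses the fact that $A$ is the adjacency matrix of a graph, and that excludes this extremal spectrum strictly. The paper gets it from the equality case of McClelland's bound. We have $\avge \le \bigl(\tfrac1n\sum_i \lambda_i^2\bigr)^{1/2} \le \sqrt d$, and equality would force all $\abs{\lambda_i}$ to coincide. By Perron--Frobenius this makes every component a vertex or an edge, so $\avge < \sqrt d$ when $d \ge 2$.

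The paper then splits the integrality product over $J = \dset{i}{\lambda_i < -k}$ and $K = \dset{i}{\lambda_i > -k}$.
\begin{itemize}
\item On $J$, AM--GM together with $\sum_{i \in J}\abs{\lambda_i} \le \avge n/2$ and $\abs{J} \ge n/2$ gives $\prod_{i\in J}\abs{k+\lambda_i} \le (\avge - k)^{\abs{J}} < (\sqrt d - k)^{\abs{J}}$.
\item On $K$, it first bounds $\sum_{i\in K}(k+\lambda_i)^2$ by combining $\sum_i (k+\lambda_i)^2 \le (k^2+d)n$ with a second AM--GM estimate that reuses integrality. QM--GM then gives $\prod_{i\in K}\abs{k+\lambda_i} \le (\sqrt d + k)^{\abs{J}}$.
\end{itemize}
Multiplying, the product is less than $\bigl((\sqrt d - k)(\sqrt d + k)\bigr)^{\abs{J}} = 1$, contradicting integrality. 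The strictness enters only through the absolute first moment $\avge$, which your plan never uses.
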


\begin{theorem}\label{thm:lower_bound_d_large}
    For every integer $d \ge 75$, the median eigenvalues of any graph of maximum degree $d$ are at least $-\sqrt{d-1}$.
\end{theorem}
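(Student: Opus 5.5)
We argue by contradiction, using the trace method with a low-degree test polynomial. Let $G$ have maximum degree $d\ge 75$ and order $n$, and suppose $\lal<-\sqrt{d-1}$. Since $\lal\ge\lambda_{\ell+1}\ge\cdots\ge\lambda_n$, at least $n-\ell+1=\lfloor (n+1)/2\rfloor\ge n/2$ eigenvalues lie strictly below $-s$, where $s:=\sqrt{d-1}$. Write $N=\dset{i}{\lambda_i<-s}$ and $P$ for the remaining indices, so $\abs{N}\ge\abs{P}$.

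\textbf{Moment identities.} The only spectral information I plan to use is the low-order traces:
\begin{itemize}
\item $\tr A_G=0$;
\item $\tr A_G^2=\sum_v \deg v\le dn$;
\item $\tr A_G^3=6t\ge 0$, where $t$ is the number of triangles;
\item $\tr A_G^4=\sum_v \deg(v)^2+\sum_v\bigl(\deg(v)^2-\deg(v)\bigr)+8q$, where $q$ counts $4$-cycles. In particular $\tr A_G^4\le 2\sum_v\deg(v)^2+8q$.
\end{itemize}
I would pick a polynomial $f$ of degree at most $4$, say $f(x)=x^4+a x^3+b x^2+c x$ with parameters depending on $d$. The aim is to make $f(x)\ge \alpha$ on $(-\infty,-s)$ and $f(x)\ge -\beta$ on $[-s,d]$, while $\tr f(A_G)$ is bounded above in terms of $n$ and $d$. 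Then
\[
\tfrac n2(\alpha-\beta)\le \sum_i f(\lambda_i)=\tr f(A_G)
\]
should give the contradiction. The natural template is the extremal configuration, the incidence graph of a projective plane, whose spectrum sits at $\pm s$ and $\pm d$. So $f$ should vanish to second order at $x=s$, giving a factor $(x-s)^2$, and should be as small as possible at $x=-s$. Because we may take $a>0$, the $\tr A_G^3$ term enters with a favourable sign only after it is traded against $\tr A_G^4$; this is exactly where the lower bound differs from \cref{thm:upper_bound}.

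\textbf{Main obstacle.} The key step is bounding $\tr A_G^4$, and in particular the $4$-cycle count $q$, by something like $(d-1)\cdot\tr A_G^2$ plus a correction that is controlled by triangles. For triangle-free graphs (\cref{thm:lower_bound_triangle_free}) a local count per vertex works. With triangles present, the two sides of an edge share common neighbours, which inflates $q$ and hence $\tr A_G^4$. I would handle this by a local double count: for each edge $uv$ with codegree $c_{uv}$, I would bound its contribution to closed $4$-walks by roughly $(\deg u-1)(\deg v-1)+O(c_{uv}^2)$, and then charge the $O(c_{uv}^2)$ excess to $\tr A_G^3=\sum_{uv}2c_{uv}$, which appears with a positive coefficient $a$. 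The loss in this charging is a constant factor independent of $d$, whereas the slack in the main inequality grows like $\sqrt d$. So the argument should close once $d$ exceeds an explicit threshold, and I expect a careful choice of $a,b,c$ to push that threshold down to $75$.

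\textbf{Fallbacks.} If the quartic does not suffice, I would add a $\tr A_G^5$ term, or combine the estimate with the average-energy upper bound obtained as a byproduct. That bound limits $\sum_i\abs{\lambda_i}$, and hence how much negative mass below $-s$ can be balanced by positive eigenvalues through $\tr A_G=0$.
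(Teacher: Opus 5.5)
\textbf{There is a genuine gap, and it sits exactly at the step you flagged.}

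\textbf{The $\tr A_G^4$ estimate cannot hold.} Your $f$ has leading coefficient $+1$ and you need $\tr f(A_G)$ bounded \emph{above}. So you need upper bounds on $\tr A_G^4$, and also on $a\tr A_G^3$ with $a>0$. Nothing of the proposed shape holds:
\begin{itemize}
\item a disjoint union of copies of $K_{d,d}$ is triangle-free, yet has $\tr A_G^4=d^2\tr A_G^2$;
\item even a $C_4$-free $d$-regular graph has $\tr A_G^4=(2d-1)\tr A_G^2$, not $(d-1)\tr A_G^2$ plus a triangle term;
\item charging a $c_{uv}^2$ excess against $\tr A_G^3=\sum_{uv}2c_{uv}$ loses a factor of order $\max c_{uv}$, which is $d-1$ in $K_{d+1}$, not a constant.
\end{itemize}
The paper's triangle-free case runs the other way. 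Its polynomial has leading coefficient $-1$, and it uses only the \emph{lower} bound $\tr A_G^4\ge\sum_v(2\deg(v)^2-\deg(v))$ from \cref{lem:moments}.

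\textbf{No degree-$4$ test polynomial can work, even with the average-energy bound.} Put $s=\sqrt{d-1}$. Consider the measure with mass $\frac{1}{d^2-d+1}$ at $d$, mass $\frac12+\frac{d-s}{2s(d^2-d+1)}$ at $-s$, and the remaining mass at $s$. Its moments are $0,d,d,2d^2-d$. These are exactly the normalized traces $\frac1n\tr A_G^k$ of a $d$-regular $C_4$-free graph in which every edge lies in exactly one triangle. For even $d$ such graphs exist, e.g.\ shadows of $(d/2)$-regular $3$-uniform hypergraphs of girth at least $5$. Its mean absolute value is exactly $s+\frac1{d+s}$.

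Now move the atoms to $-s-a$ and $s-\frac{a(1+s)}{s+a-1}$. Reweight the three atoms to keep mass $1$, mean $0$ and second moment $d$, and call the result $\nu$. A direct computation shows $\nu$ still has third moment $d$. Its fourth moment is $2d^2-d+\theta d(d-2)$ with $\theta=\frac{a(2s+a)}{s+a-1}$. For small $a>0$, the mass of $\nu$ at $-s-a<-s$ is still above $\frac12$, and its mean absolute value has decreased.

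These moments are realized by a graph $G'$ of maximum degree $d$. Take a disjoint union of such $C_4$-free graphs with copies of the Hamming graph $H(d/2,3)$, whose moments are $0,d,d,3d^2-3d$, and choose $a$ so that $\theta$ is the proportion of vertices in the Hamming copies. Then $\frac1n\tr f(A_{G'})=\int f\,d\nu$ for every $f$ of degree at most $4$. So any bound $\tr f(A_G)\le Un$ valid for all graphs of maximum degree $d$ is also satisfied by a measure with more than half its mass strictly below $-\sqrt{d-1}$, and your contradiction can never be reached. The average-energy fallback is defeated by the same $\nu$.

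\textbf{What is missing is arithmetic input.} The paper uses that $\prod_{i:\abs{\lambda_i}\ne\sqrt{d-1}}(d-1-\lambda_i^2)$ is a nonzero integer, hence at least $1$ in absolute value. It then shows the product is $<1$ for $d\ge 75$, using:
\begin{itemize}
\item AM--GM on the factors;
\item \cref{thm:avg_energy} and \cref{lem:polynomial_bound};
\item the bound on the number of eigenvalues of absolute value above $\delta$;
\item the optimization in \cref{prop:product_bound}.
\end{itemize}
The mechanism is this. The energy bound forces the at least $n/2$ eigenvalues below $-\sqrt{d-1}$ to lie, on average, within $\frac1{d+\sqrt{d-1}}$ of it. Integrality forbids precisely this clustering near $-\sqrt{d-1}$, which is what $\nu$ exhibits and what no moment argument can see.
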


\begin{remark}
    \Cref{thm:lower_bound_d-1_perfect_square} replaces the maximum-degree hypothesis with an average-degree hypothesis.
\end{remark}

A key ingredient in the proof of \cref{thm:lower_bound_d_large} is an upper bound on the \emph{average energy} of a graph $G$ on $n$ vertices, defined by
\[
    \avge = \frac{1}{n}\sum_{i=1}^n \abs{\lambda_i}.
\]
This notion is closely related to the \emph{energy} of a graph, namely $\sum_i \abs{\lambda_i}$, introduced by Gutman \cite{G78} in connection with H\"uckel molecular orbital theory. The first result on average energy dates back to McClelland \cite{Mc71}, who proved that the average energy of any graph of average degree $d$ is at most $\sqrt{d}$ (see \cref{thm:mc-clelland-bound}).

For every $d$-regular graph $G$, van Dam, Haemers, and Koolen \cite{DHK14} proved that its average energy is at most $\sqrt{d-1} + 1/(d + \sqrt{d-1})$, with equality if and only if $G$ is a vertex-disjoint union of incidence graphs of projective planes of order $d-1$, or, when $d = 2$, a vertex-disjoint union of triangles and hexagons\footnote{In \cite[Theorem 1.1]{DHK14}, the bound on the average energy for $d$-regular graphs is stated as $(d+(d^2-d)\sqrt{d-1})/(d^2-d+1)$, which simplifies to $\sqrt{d-1} + 1/(d + \sqrt{d-1})$.}. Here we generalize their result to graphs of bounded maximum degree, from which we obtain a bound on the median eigenvalues.

\begin{theorem}\label{thm:avg_energy}
    For every integer $d \ge 3$ and every graph $G$ of maximum degree $d$, its average energy satisfies
    \[
        \avge \le \sqrt{d-1} + \frac{1}{d + \sqrt{d-1}}.
    \]
\end{theorem}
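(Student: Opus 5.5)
Following van Dam, Haemers, and Koolen, I would use a polynomial (moment) argument: bound $\abs{x}$ from above on the real line by a low-degree polynomial whose expected value over the spectrum is controlled by closed-walk counts. The natural choice is a quadratic
\[
    p(x) = a + b x + c x^2,
\]
with $c > 0$, tangent to $\abs{x}$ at $x = \pm t$ for suitable $t>0$. Since $\abs{x}$ is not symmetric-friendly with a linear term, I would instead use a cubic or a combination exploiting $\tr A_G = 0$, $\tr A_G^2 = 2e(G) = \sum_v d_v$ and $\tr A_G^3 = 6\cdot\#\text{triangles} \ge 0$. Concretely, I would seek $q(x) = \alpha + \beta x^2 - \gamma x^3$ (or with a linear term, which averages to zero) satisfying $q(x) \ge \abs{x}$ on $[-\lambda_1, \lambda_1]$, where $\lambda_1 \le d$. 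Then
\[
    n\,\avge \le \sum_i q(\lambda_i) = \alpha n + \beta \sum_v d_v - 6\gamma\,T \le \alpha n + \beta \sum_v d_v .
\]

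The extremal configuration guides the parameters. For incidence graphs of projective planes, the spectrum is $\pm d$ (once each) and $\pm\sqrt{d-1}$ (all other eigenvalues). So $q$ should touch $\abs{x}$ at $x=\pm\sqrt{d-1}$ and at $x=d$. The value $-d$ only occurs in bipartite components, and there the cubic term vanishes in the trace, so I would keep the polynomial even, $q(x) = \alpha + \beta x^2$, on the negative side. A cleaner route is to use the even quadratic $q(x) = \alpha + \beta x^2$ passing through $(\sqrt{d-1}, \sqrt{d-1})$ and $(d,d)$. This gives $q \ge \abs{x}$ outside $(\sqrt{d-1}, d)$, but $q < \abs{x}$ between these points, which is a problem.

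To fix this, I would treat the top eigenvalue(s) separately using interlacing/Perron information rather than relying on the polynomial alone. I would take the secant quadratic tangent at $\sqrt{d-1}$: $q(x) = \tfrac{\sqrt{d-1}}{2} + \tfrac{x^2}{2\sqrt{d-1}}$. This satisfies $q\ge\abs{x}$ everywhere, and its mean equals $\tfrac{\sqrt{d-1}}{2} + \tfrac{\bar d}{2\sqrt{d-1}}$. Subtracting the overshoot from the large eigenvalues gives
\[
    n\,\avge \le \sum_i q(\lambda_i) - \sum_{\abs{\lambda_i} > \sqrt{d-1}} \bigl(q(\lambda_i) - \abs{\lambda_i}\bigr).
\]
The key step is a lower bound on the overshoot. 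Since $q(x) - \abs{x} = (\abs{x} - \sqrt{d-1})^2 / (2\sqrt{d-1})$, I need the large eigenvalues to carry enough squared excess. I would obtain this per connected component: $\lambda_1 \ge \bar d$ of the component, while $\tr A^2 = \sum d_v$ bounds the rest. The bound should then be optimized jointly over the component's order $m$ and edge count, reducing to a one-variable inequality in $m$ and the average degree $\bar d \le d$.

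The main obstacle is that, unlike the regular case, $\lambda_1$ and $\bar d$ can differ, and small components, such as those with $m < d^2 - d + 2$ vertices, must be handled because Moore-type counting enters. I would first try to show that for each component the inequality $\sum_i \abs{\lambda_i} \le m(\sqrt{d-1} + 1/(d+\sqrt{d-1}))$ follows from
\[
    \sum_i \abs{\lambda_i} \le \lambda_1 + \sqrt{(m-1)(2e - \lambda_1^2)},
\]
by Cauchy--Schwarz, the Koolen--Moulton-style bound. I would then maximize over $\lambda_1 \in [2e/m, d]$ and $2e \le dm$, and check monotonicity in $\bar d$, with the maximum attained at $\bar d = d$. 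For $\bar d = d$ this recovers exactly the DHK bound, so the remaining work is verifying that lowering $\bar d$ only helps. This requires a careful calculus check, especially when $m$ is small relative to $d^2$, where the Cauchy--Schwarz bound could exceed the target and an extra argument (e.g., $m\,\avge \le \sqrt{2e\,m}$ by McClelland, \cref{thm:mc-clelland-bound}, for small $m$) is needed.
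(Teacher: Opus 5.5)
There is a genuine gap. Everything your argument uses is second-moment information plus one Perron eigenvalue per component. That is: $\abs{\lambda_i}\le d$, $\tr A_G=0$, $\tr A_G^2=\sum_v d_v$, $\tr A_G^3\ge 0$, and $\lambda_1\ge\dbar$ in each component. All of these are compatible with a (hypothetical) spectrum in which $\lambda_1=d$ and every other $\abs{\lambda_i}$ is about $\sqrt d$. Such a spectrum has average energy tending to $\sqrt d$, which lies strictly above the target, because $\sqrt d-\sqrt{d-1}=1/(\sqrt d+\sqrt{d-1})>1/(d+\sqrt{d-1})$.

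Concretely, the per-component inequality you plan to verify,
\[
    \lambda_1+\sqrt{(m-1)(2e-\lambda_1^2)}\le m\Bigl(\sqrt{d-1}+\frac{1}{d+\sqrt{d-1}}\Bigr),
\]
is false. With $2e=dm$ and $\lambda_1=d$ the left side is $(1+o(1))\,m\sqrt d$. For $d=3$ it already fails at $m=8$: $3+\sqrt{105}\approx 13.25$, while the right side is about $13.13$. So the Koolen--Moulton bound does not recover the van Dam--Haemers--Koolen bound at $\dbar=d$. The obstruction is large components, not small ones, and the McClelland fallback only gives $\sqrt{\dbar}$.

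Your quadratic route has the same defect. At $\dbar=d$ the mean of $q$ is $\sqrt{d-1}+1/(2\sqrt{d-1})$, so you would need a total overshoot of order $n$. Perron--Frobenius guarantees only one large eigenvalue per component. Second-moment information cannot force more, since it is consistent with the spectrum where all $\abs{\lambda_i}\approx\sqrt d$.

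The missing ingredient is the fourth moment. Counting closed walks of the forms $ababa$, $abaca$, $abcba$ gives $\tr A_G^4\ge\sum_v(2d_v^2-d_v)\ge(2\dbar^2-\dbar)n$. This is exactly what excludes the spectrum above, which would have $\tr A_G^4\approx d^2 n$.

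The paper exploits this with the quartic $f(x)=(x-d)(x-\sqrt{d-1})^2(x+d+2\sqrt{d-1})=x^4-\alpha x^2+\beta x-\gamma$. Its last root is chosen to kill the $x^3$ term, because $\sum_i\abs{\lambda_i}^3$ is not a walk count. Since $f\le 0$ on $[0,d]$, averaging $f(\abs{\lambda_i})$ gives
\[
    0\ge 2\dbar^2-(1+\alpha)\dbar+\beta\avge-\gamma .
\]
Because $(1+\alpha)/4>d$, the quadratic part in $\dbar$ is at least its value at $\dbar=d$. The resulting inequality is exactly $\avge\le\sqrt{d-1}+1/(d+\sqrt{d-1})$. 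This handles irregular graphs directly, with no component-wise analysis and no Perron--Frobenius input.
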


\begin{corollary}\label{cor:median_energy_bound}
    For every integer $d\ge 3$, the median eigenvalues of any graph of maximum degree $d$ are at most $\sqrt{d-1}+1/(d+\sqrt{d-1})$ in absolute value.
\end{corollary}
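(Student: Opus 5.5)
The plan is to deduce \cref{cor:median_energy_bound} from \cref{thm:avg_energy} by a counting argument: if a median eigenvalue were large in absolute value, then about half of the spectrum would be at least as large in absolute value, forcing the average energy above the bound.

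By symmetry it suffices to bound $\lambda_h$ from above and $\lambda_\ell$ from below. We use the fact that the trace of $A_G$ vanishes, so $\sum_i \lambda_i = 0$. Write $n$ for the order of $G$ and set $c = \sqrt{d-1}+1/(d+\sqrt{d-1})$.

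First suppose $\lambda_h = t > c$ (so $t>0$). Since $\lambda_1 \ge \cdots \ge \lambda_h \ge t$, the positive eigenvalues sum to at least $ht$. Because the trace is zero, the negative eigenvalues have absolute values summing to the same amount. Hence
\[
    \sum_i \abs{\lambda_i} = 2\sum_{\lambda_i>0}\lambda_i \ge 2ht .
\]
Now $h = \lfloor (n+1)/2 \rfloor \ge n/2$, so $n\,\avge \ge nt$, that is, $\avge \ge t > c$. This contradicts \cref{thm:avg_energy}.

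Symmetrically, suppose $\lambda_\ell = -t$ with $t > c$. Then the eigenvalues $\lambda_\ell, \dots, \lambda_n$ are all at most $-t$, and there are $n-\ell+1 = \lfloor (n+1)/2\rfloor \ge n/2$ of them. So the absolute values of the negative eigenvalues sum to at least $nt/2$. The same trace argument gives $\avge \ge t > c$, again a contradiction.

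It remains to handle the opposite-side inequalities, $\lambda_h \ge -c$ and $\lambda_\ell \le c$. These follow from $\lambda_\ell \le \lambda_h$ together with the two bounds just proved: $\lambda_h \ge \lambda_\ell \ge -c$ and $\lambda_\ell \le \lambda_h \le c$.

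The main step to check carefully is the index count, namely that $h \ge n/2$ and $n-\ell+1 \ge n/2$ for both parities of $n$; this is immediate from the definitions of $h$ and $\ell$. Everything else is a direct application of \cref{thm:avg_energy} and the vanishing trace.
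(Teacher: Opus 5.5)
Your proposal is correct and follows the paper's route. The paper proves the corollary by combining \cref{thm:avg_energy} with \cref{thm:median_energy_bound} (median eigenvalues are at most $\avge$ in absolute value), and its streamlined proof of the latter is the same trace-zero counting argument you give, with $h \ge n/2$ and $n-\ell+1 \ge n/2$. You additionally spell out the two remaining inequalities, $\lah \ge -c$ and $\lal \le c$, via $\lal \le \lah$; the paper leaves these implicit.
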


The rest of the paper is organized as follows. In \cref{sec:bounds_median_ev} we prove \cref{thm:upper_bound,thm:lower_bound_triangle_free}. In \cref{sec:median_ev_avg_deg} we prove \cref{thm:lower_bound_d-1_perfect_square}, and in \cref{sec:bounds_avg_energy} we prove \cref{thm:avg_energy}. The proof of \cref{thm:lower_bound_d_large} is given in \cref{sec:lower_bound_d_large}. We conclude with some further remarks in \cref{sec:further_remarks}.

\section{Bounds on median eigenvalues} \label{sec:bounds_median_ev}

\begin{lemma} \label{lem:moments}
    For every $n$-vertex graph $G$ of average degree $\dbar$, its eigenvalues $\lambda_1, \dots, \lambda_n$ satisfy
    \[
        \sum_i \lambda_i = 0, \quad
        \sum_i \lambda_i^2 = \dbar n, \quad
        \sum_i \lambda_i^3 \ge 0, \quad
        \sum_i \lambda_i^4 \ge \mleft(2\dbar^2-\dbar\mright)n,
    \]
    and moreover $\sum_i \lambda_i^3 = 0$ if and only if $G$ is triangle-free.
\end{lemma}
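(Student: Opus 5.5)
The plan is to express each power sum as a trace, $\sum_i \lambda_i^k = \tr(A_G^k)$, and to read $\tr(A_G^k)$ as the number of closed walks of length $k$ in $G$. Each identity or inequality then reduces to counting, or lower-bounding, closed walks of a fixed length. Write $d_v$ for the degree of $v$ and $m$ for the number of edges, so that $\sum_v d_v = 2m = \dbar n$.

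For $k=1$, $\tr(A_G)=0$ because $G$ has no loops. For $k=2$, a closed walk of length $2$ from $v$ is just a choice of a neighbor of $v$ and a return, so $\tr(A_G^2)=\sum_v d_v=\dbar n$. For $k=3$, every closed walk of length $3$ traverses a triangle, and each triangle yields exactly $6$ closed walks (three starting vertices and two directions). Hence $\tr(A_G^3)=6t\ge 0$, where $t$ is the number of triangles, with equality if and only if $t=0$, that is, if and only if $G$ is triangle-free.

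For $k=4$, I would lower-bound $\tr(A_G^4)$ by counting only the closed walks of length $4$ that stay inside a tree-like structure; walks around $4$-cycles contribute nonnegatively and can be discarded. Fix a start vertex $v$. The walks $v\to u\to v\to w\to v$ with $u,w\in N(v)$ contribute $d_v^2$. The walks $v\to u\to x\to u\to v$ with $x\ne v$ contribute $\sum_{u\in N(v)}(d_u-1)$. Both families are closed walks, and they are disjoint because the third vertex is $v$ in the first family and $x\neq v$ in the second. Summing over $v$, the second family contributes $\sum_v\sum_{u\sim v}(d_u-1)=\sum_u d_u(d_u-1)$. Therefore
\[
\tr(A_G^4)\ \ge\ \sum_v d_v^2+\sum_u d_u(d_u-1)\ =\ 2\sum_v d_v^2-\sum_v d_v .
\]

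To finish, apply Cauchy--Schwarz in the form $\sum_v d_v^2\ge (\sum_v d_v)^2/n=\dbar^2 n$. This gives $\tr(A_G^4)\ge 2\dbar^2 n-\dbar n=(2\dbar^2-\dbar)n$, as claimed.

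The only delicate step is the fourth-moment count, where the two families of walks must be shown to be disjoint and not double-counted. Separating them by whether the walk returns to $v$ at step $2$ handles this cleanly.
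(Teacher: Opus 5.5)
Your proposal is correct and follows the paper's proof essentially verbatim. Both arguments read $\tr(A_G^k)$ as closed walks, get $6t$ for $k=3$, and for $k=4$ count the same two families, namely $ababa$/$abaca$ giving $\sum_v d_v^2$ and $abcba$ with $c\neq a$ giving $\sum_u d_u(d_u-1)$, before applying Cauchy--Schwarz. Your explicit disjointness check via the third vertex of the walk is a detail the paper leaves implicit.
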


\begin{proof}
    Notice that $\sum_i \lambda_i^k = \tr(A_G^k)$ counts the number of closed walks of length $k$ in $G$. Clearly, $\sum_i \lambda_i = 0$, and $\sum_i \lambda_i^2 = \dbar n$, since this quantity counts each edge of $G$ twice. The sum $\sum_i \lambda_i^3$ counts six times the number of triangles in $G$, and is therefore non-negative; moreover, it is equal to $0$ if and only if $G$ is triangle-free. Finally, by considering only the closed walks of the form $ababa$, $abaca$, or $abcba$, we obtain the lower bound via the Cauchy--Schwarz inequality:
    \begin{equation*}
        \sum_i \lambda_i^4 \ge \sum_i d_i^2 + d_i(d_i - 1) = \sum_i 2d_i^2 - d_i \ge \mleft(2\dbar^2-\dbar\mright)n,
    \end{equation*}
    where $(d_1, \dots, d_n)$ denotes the degree sequence of $G$.
\end{proof}

\begin{remark}
    Our bound on $\sum_i \lambda_i^4$ is inspired by van Dam, Haemers, and Koolen \cite[page 125]{DHK14}, where they prove the same bound for $d$-regular graphs.
\end{remark}

\begin{figure}
    \centering
    \begin{tikzpicture}[x=0.42cm, y=0.7cm]
        \draw[->] (-10.6,0) -- (11.7,0);
        \draw[dashed] (3,0) -- (3,5);
        \draw[dashed] (10,0) -- (10,5);
        \draw[dashed] (3,3.8025) -- (10,3.8025);
        \draw[thick] plot[smooth] coordinates {
            (-10,0.0000) (-9.5,0.4357) (-9,0.7245) (-8.5,0.8905) (-8,0.9563)
            (-7.5,0.9429) (-7,0.8700) (-6.5,0.7557) (-6,0.6165) (-5.5,0.4676)
            (-5,0.3225) (-4.5,0.1934) (-4,0.0907) (-3.5,0.0238) (-3,0.0000)
            (-2.5,0.0255) (-2,0.1050) (-1.5,0.2415) (-1,0.4365) (-0.5,0.6902)
            (0,1.0012) (0.5,1.3666) (1,1.7820) (1.5,2.2414) (2,2.7375)
            (2.5,3.2613) (3,3.8025) (3.5,4.3491) (4,4.8877) (4.5,5.4035)
            (5,5.8800) (5.5,6.2993) (6,6.6420) (6.5,6.8872) (7,7.0125)
            (7.5,6.9940) (8,6.8063) (8.5,6.4224) (9,5.8140) (9.5,4.9512)
            (10,3.8025) (10.5,2.3351) (11,0.5145) (11.125,0.0000)
        };
        \foreach \x/\y/\label in {-10/-0.12/$-d$, -3/-0.12/$-\eps_0$, 3/-0.2/$\eps_0$, 10/-0.12/$d$, 11.125/-0.26/$\alpha$} {
            \draw (\x,0.12) -- (\x,-0.12);
            \node[below] at (\x,\y) {\label};
        }
    \end{tikzpicture}
    \caption{The graph of the magic polynomial.} \label{fig:f-graph}
\end{figure}
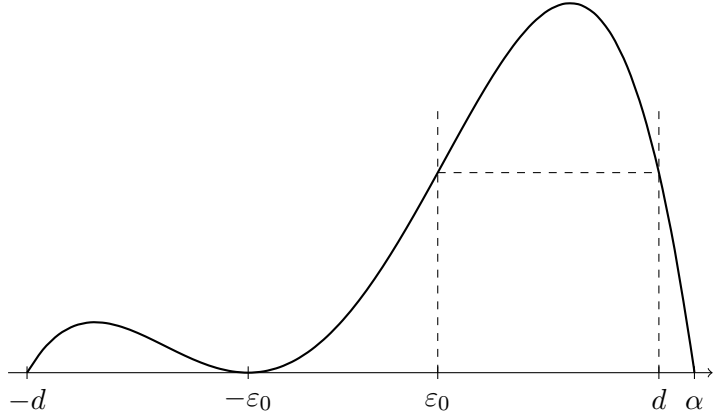

\begin{proof}[Proof of \Cref{thm:upper_bound}]
    Set $\eps_0 = \sqrt{d-1}$. Consider the magic polynomial $f$ defined by
    \[
        f(x) = (\alpha - x)(x+\eps_0)^2(x+d),
    \]
    where $\alpha \in \mathbb{R}$ is chosen so that $f(\eps_0) = f(d)$. Solving this equation yields
    \[
        \alpha = \frac{d^2 + 2d\eps_0 + 2\eps_0^2}{d + 2\eps_0}.
    \]
    Since $f(-d) = f(-\eps_0) = 0$, $f(\eps_0) = f(d)$, and $\alpha > d$, it follows that the graph of the quartic polynomial $f$ has the shape shown in \cref{fig:f-graph}. More precisely,
    \[
        f(x) \ge 0 \text{ for } x \in [-d, d] \quadand f(x) > f(d) \text{ for } x \in (\eps_0, d).
    \]
    
    We denote the coefficients of $f(x)$ by
    \[
        f(x) = -x^4 + c_3 x^3 + c_2 x^2 + c_1 x + c_0,
    \]
    and, by Vi\`ete's formulas,
    \[
        c_3 = \alpha - (d + 2\eps_0) < 0 \quadand c_2 = \alpha(d + 2\eps_0) - 2d\eps_0 - \eps_0^2 = d^2 + \eps_0^2 = d^2 + d - 1.
    \]

    Let $G$ be an $n$-vertex graph of maximum degree $d$, and let $\lambda_1, \dots, \lambda_n$ be the eigenvalues of $G$. We adopt a probabilistic point of view: let $\lambda$ be chosen uniformly at random from $\sset{\lambda_1, \dots, \lambda_n}$. Using \cref{lem:moments} and the fact that $c_3 < 0$, we estimate the expectation of $f(\lambda)$ as follows:
    \begin{equation} \label{eq:expectation_f_lambda}
        \EE f(\lambda) = -\frac{1}{n}\sum_i \lambda_i^4 + \frac{c_3}{n} \sum_i \lambda_i^3 + \frac{c_2}{n} \sum_i \lambda_i^2 + \frac{c_1}{n} \sum_i \lambda_i + c_0 \le -\dbar (2\dbar-1) + c_2 \dbar + c_0,
    \end{equation}
    which, as a quadratic function of $\dbar$, is increasing on $(-\infty, (1+c_2)/4]$. Since $c_2 = d^2 + d - 1 \ge 4d - 1$ for $d \ge 3$, we have $\dbar \le d \le (1+c_2)/4$, and therefore
    \[
        \EE f(\lambda) \le -d(2d-1) + c_2 d + c_0.
    \]

    We now consider the random variable $\mu$ with the following distribution:
    \[
        \Pr(\mu = -d) = \Pr(\mu = d) = \frac{1}{2(d^2 - d + 1)} \quadand
        \Pr(\mu = -\eps_0) = \Pr(\mu = \eps_0) = \frac{d^2 - d}{2(d^2 - d + 1)}.
    \]
    A direct computation shows that
    \begin{equation} \label{eq:moments_mu}
        \EE \mu = 0, \quad
        \EE \mu^2 = d, \quad
        \EE \mu^3 = 0, \quad
        \EE \mu^4 = d(2d-1).
    \end{equation}
    Using the facts that $f(-d) = f(-\eps_0) = 0$ and $f(\eps_0) = f(d)$, we have
    \[
        \EE f(\lambda) \le -d(2d-1) + c_2d + c_0 \stackrel{\eqref{eq:moments_mu}}{=} \EE f(\mu) = \Pr(\mu = \eps_0 \text{ or } \mu = d) f(d) = \frac{1}{2} f(d).
    \]
    On the other hand, since $\abs{\lambda_i} \le d$ for all $i$, $f(x) \ge 0$ for $x \in [-d, d]$, and $f(x) > f(d)$ for $x \in (\eps_0, d)$, we have
    \[
        \EE f(\lambda) \ge \Pr(\eps_0 < \lambda \le d) f(d).
    \]
    Therefore $\Pr(\eps_0 < \lambda \le d) \le 1/2$, and equality can hold only if $\Pr(\lambda = d) = 1/2$.
    
    We claim that $\Pr(\lambda = d) < 1/2$. Since $d$ is the largest eigenvalue of a connected component of $G$ if and only if that component is $d$-regular, the Perron--Frobenius theorem implies that the multiplicity of $d$ as an eigenvalue of $G$ is at most $n/(d+1)$, and hence $\Pr(\lambda = d) < 1/2$.
    
    Finally, we must have the strict inequality $\Pr(\eps_0 < \lambda \le d) < 1/2$, which implies that the median eigenvalues of $G$ are at most $\eps_0 = \sqrt{d-1}$.
\end{proof}

\begin{remark}
    The incidence graph $H$ of a projective plane of order $d-1$ has eigenvalues $\pm d$ with multiplicity $1$ each, and eigenvalues $\pm \sqrt{d-1}$ with multiplicity $d^2 - d$ each. The random variable $\mu$ can be interpreted as being chosen uniformly at random from the eigenvalues of $H$.
\end{remark}

One might be tempted to apply the same strategy to obtain a lower bound on the median eigenvalues by considering the expectation $\EE f(-\lambda)$ instead of $\EE f(\lambda)$, as this would bound the probability $\Pr(-d \le \lambda < -\eps_0)$. However, the term $(c_3/n)\sum_i \lambda_i^3$ in \eqref{eq:expectation_f_lambda} would become $-(c_3/n)\sum_i \lambda_i^3$, which is no longer guaranteed to be non-positive. Nevertheless, the triangle-free assumption circumvents this difficulty.

\begin{proof}[Proof of \Cref{thm:lower_bound_triangle_free}]
    The proof is nearly identical to that of \cref{thm:upper_bound}, except that we consider $\EE f(-\lambda)$ instead of $\EE f(\lambda)$ and modify the probabilistic argument accordingly. Since $G$ is triangle-free, \cref{lem:moments} gives $\sum_i \lambda_i^3 = 0$, so \eqref{eq:expectation_f_lambda} remains valid.
\end{proof}

\section{\texorpdfstring{Lower bound when $d-1$ is a perfect square}{Lower bound when d-1 is a perfect square}} \label{sec:median_ev_avg_deg}

To prove \cref{thm:lower_bound_d-1_perfect_square}, we use McClelland's upper bound on average energy \cite{Mc71}, together with a characterization of the extremal graphs.

\begin{theorem}[McClelland \cite{Mc71}]\label{thm:mc-clelland-bound}
    For every graph $G$ with average degree at most $d$, its average energy satisfies
    \[
        \avge \le \sqrt{d}
    \]
    with equality if and only if $G$ is an empty graph or a matching, and $d$ is equal to $0$ or $1$ respectively.
\end{theorem}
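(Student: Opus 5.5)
The plan is to combine Cauchy--Schwarz with the first two moment identities from \cref{lem:moments}. Let $G$ have $n$ vertices, average degree $\dbar \le d$ and eigenvalues $\lambda_1, \dots, \lambda_n$. By \cref{lem:moments}, $\sum_i \lambda_i^2 = \dbar n \le dn$. Cauchy--Schwarz then gives
\[
    \Bigl(\sum_i \abs{\lambda_i}\Bigr)^2 \le n \sum_i \lambda_i^2 = \dbar n^2 \le d n^2 ,
\]
so $\avge \le \sqrt{\dbar} \le \sqrt{d}$. This settles the inequality.

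For the equality case, equality forces both steps to be tight. First, $\dbar = d$. Second, Cauchy--Schwarz is tight, so all $\abs{\lambda_i}$ are equal to a common value $c$, with $c^2 = \dbar = d$.
\begin{itemize}
    \item If $c = 0$, then $A_G = 0$. So $G$ is an empty graph and $d = 0$.
    \item If $c > 0$, then every eigenvalue is $\pm c$. Since $A_G$ is symmetric, this gives $A_G^2 = c^2 I = dI$. The diagonal of $A_G^2$ records the degrees, so $G$ is $d$-regular. An off-diagonal entry of $A_G^2$ counts common neighbours, so no two distinct vertices have a common neighbour.
\end{itemize}
In a $d$-regular graph, any vertex of degree $d \ge 2$ makes two of its neighbours share a common neighbour. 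Hence $d \le 1$, and with $c > 0$ we get $d = 1$: $G$ is $1$-regular, i.e.\ a perfect matching.

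Conversely, the empty graph with $d = 0$ has $\avge = 0 = \sqrt{0}$. A perfect matching with $d = 1$ has spectrum $\pm 1$, so $\avge = 1 = \sqrt{1}$. Both are therefore extremal.

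The main subtlety is reading the hypothesis correctly: ``average degree at most $d$'' with equality in $\sqrt{d}$. Equality forces $\dbar = d$, and if $d$ is not an integer the bound is simply strict. I expect no real obstacle beyond recording the equality analysis of Cauchy--Schwarz carefully.
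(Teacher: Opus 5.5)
Your proof is correct. The inequality is handled exactly as in the paper, via Cauchy--Schwarz and $\sum_i \lambda_i^2 = \dbar n \le dn$ from \cref{lem:moments}.

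The equality case takes a genuinely different route. The paper works component by component. For a connected component $C$ whose eigenvalues all equal $\pm c$, the Perron--Frobenius theorem makes the top eigenvalue $c$ simple. The eigenvalues of $C$ sum to $0$, so $-c$ is also simple, and hence $C$ has order at most $2$.

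You instead apply the spectral theorem globally to get $A_G^2 = c^2 I$, and then read the structure off combinatorially. The diagonal entries give $d$-regularity. The off-diagonal entries forbid common neighbours, which is impossible once some vertex has two neighbours, so $d \le 1$.

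What each buys: your version uses neither Perron--Frobenius nor the trace identity $\sum_i \lambda_i = 0$. It also checks the converse explicitly (that the empty graph and perfect matchings attain $\sqrt{d}$), which the paper leaves implicit. The paper's version is slightly shorter if Perron--Frobenius is taken as a black box.
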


\begin{proof}
    Since $\sum_i \lambda_i^2 \le dn$ by \cref{lem:moments}, using the Cauchy--Schwarz inequality, we have
    \[
        \avge = \frac{1}{n}\sum_i \abs{\lambda_i} \le \sqrt{\frac{1}{n}\sum_i \lambda_i^2} \le \sqrt{d}.
    \]
    Equality holds if and only if $\abs{\lambda_i} = \sqrt{d}$ for all $i$, and $d$ is equal to the average degree of $G$.
    
    Suppose that all eigenvalues of $G$ have the same absolute value. Let $C$ be an arbitrary connected component of $G$. By the Perron--Frobenius theorem, the largest eigenvalue of $C$ has multiplicity $1$. If all eigenvalues of $C$ have the same absolute value and still sum to $0$, then $C$ must have order at most $2$; that is, $C$ is either a single vertex or a single edge. Since this holds for every connected component of $G$, the graph $G$ is either an empty graph or a matching.
\end{proof}

The main idea in the proofs of \cref{thm:lower_bound_d-1_perfect_square} is to combine average-energy estimates with an integrality argument. We now present the proof of \cref{thm:lower_bound_d-1_perfect_square}.

\begin{proof}[Proof of \Cref{thm:lower_bound_d-1_perfect_square}]
    Fix an integer $d \ge 2$ such that $d-1$ is a perfect square. Let $G$ be a graph of average degree at most $d$, and let $\eps = \avge$ be the average energy of $G$. Set
    \[
        \eps_0 = \sqrt{d-1} \in \N^+ \quadand \eps_1 = \sqrt{d}.
    \]
    We shall repeatedly use the identity
    \begin{equation} \label{eq:eps_1_eps_0_identity}
        (\eps_1 - \eps_0)(\eps_1 + \eps_0) = \eps_1^2 - \eps_0^2 = 1.
    \end{equation}
    Assume for the sake of contradiction that the lower median eigenvalue $\lal$ of $G$ is less than $-\eps_0$.
    
    Let $I$ denote the set of indices $i$ such that $\lambda_i \neq -\eps_0$. Since $\prod_{i\in I}(x - \lambda_i) \in \mathbb{Z}[x]$, we deduce that $\prod_{i \in I}(\eps_0 + \lambda_i)$ is a nonzero integer, and in particular,
    \begin{equation} \label{eq:sq_B_estimate}
        B := \prod_{i\in I} \abs*{\eps_0 + \lambda_i} \ge 1.
    \end{equation}
    We shall derive a contradiction by showing that the last inequality fails. We partition $I$ into two subsets $J$ and $K$ as follows:
    \[
        J = \dset{i}{\lambda_i < -\eps_0} \quadand K = \dset{i}{\lambda_i > -\eps_0}.
    \]
    Notice that $J \supseteq \sset{\ell, \dots, n}$, and in particular,
    \begin{equation} \label{eqn:sq_J_estimate}
        \abs{J} \ge n/2.
    \end{equation}
    This partition of $I = J \cup K$ allows us to factor $B$ as
    \[
        B_J := \prod_{i\in J} \abs*{\eps_0 + \lambda_i}
        \quadand 
        B_K := \prod_{i\in K} \abs*{\eps_0 + \lambda_i}.
    \]

    We first estimate $B_J$. By the inequality of arithmetic and geometric means, we have
    \begin{equation} \label{eq:sq_B_J_estimate}
        B_J \le \mleft(\frac{1}{\abs{J}} \sum_{i \in J} \abs*{\eps_0 + \lambda_i}\mright)^{\abs{J}}.
    \end{equation}
    Since $\lambda_i < -\eps_0 < 0$ for all $i \in J$, and $\sum_i \lambda_i = 0$ by \cref{lem:moments}, we have
    \begin{multline*}
        \sum_{i \in J} \abs*{\eps_0 + \lambda_i} = -\sum_{i \in J} (\lambda_i + \eps_0) = \abs*{\sum_{i \in J} \lambda_i} - \eps_0 \abs{J}
        = \frac{1}{2}\mleft(\abs*{\sum_{i \in J} \lambda_i} + \abs*{\sum_{i \notin J} \lambda_i}\mright) - \eps_0 \abs{J} \\
        \le \frac{1}{2}\sum_i \abs{\lambda_i} - \eps_0 \abs{J}
        = \frac{\eps n}{2} - \eps_0 \abs{J} \stackrel{\eqref{eqn:sq_J_estimate}}{\le} (\eps - \eps_0) \abs{J}.
    \end{multline*}
    Substituting this into \eqref{eq:sq_B_J_estimate}, we obtain
    \begin{equation} \label{eq:sq_B_J_estimate_2}
        B_J \le \mleft(\eps - \eps_0\mright)^{\abs{J}} < (\eps_1 - \eps_0)^{\abs{J}},
    \end{equation}
    where the last inequality follows from \cref{thm:mc-clelland-bound}.

    Now we turn to $B_K$. By the inequality of quadratic and geometric means, we have
    \begin{equation} \label{eq:sq_B_K_estimate}
        B_K \le \mleft(\frac{1}{\abs{K}} \sum_{i \in K} (\eps_0 + \lambda_i)^2\mright)^{\abs{K}/2}.
    \end{equation}
    We estimate two different linear combinations of \[
        S_J := \sum_{i \in J} (\eps_0 + \lambda_i)^2
        \quadand 
        S_K := \sum_{i \in K} (\eps_0 + \lambda_i)^2.
    \]
    For the first linear combination, since $\sum_i \lambda_i = 0$ and $\sum_i \lambda_i^2 = dn = \eps_1^2n$ by \cref{lem:moments}, we have
    \begin{equation} \label{eq:sq_B_K_estimate_1}
        S_J + S_K \le \sum_i (\eps_0 + \lambda_i)^2 = \eps_0^2 n + 2\eps_0 \sum_i \lambda_i + \sum_i \lambda_i^2 = (\eps_0^2 + \eps_1^2) n \stackrel{\eqref{eqn:sq_J_estimate}}{\le} (\eps_0^2 + \eps_1^2) \cdot 2\abs{J}.
    \end{equation}
    For the second linear combination, applying the inequality of arithmetic and geometric means, we have
    \begin{multline*}
        (\eps_1 + \eps_0)^4 S_J + S_K
        \ge \abs{I} \mleft(\prod_{i \in J} (\eps_1 + \eps_0)^4(\eps_0 + \lambda_i)^2 \cdot \prod_{i \in K} (\eps_0 + \lambda_i)^2 \mright)^{1/\abs{I}} \\
        \stackrel{\eqref{eq:sq_B_estimate}}{\ge} \abs{I} \mleft((\eps_1 + \eps_0)^{4\abs{J}}\mright)^{1/\abs{I}}
        = \frac{\abs{I}}{2\abs{J}} \mleft((\eps_1 + \eps_0)^2\mright)^{\frac{2\abs{J}}{\abs{I}}} \cdot 2\abs{J}.
    \end{multline*}
    One checks that the function $x \mapsto a^x/x$ is increasing on $[1/\ln a, \infty)$. As $(\eps_1 + \eps_0)^2 \ge \mleft(\sqrt{2} + 1\mright)^2 > e$ and $2\abs{J} / \abs{I} \ge 1$ by \eqref{eqn:sq_J_estimate}, we have
    \begin{equation} \label{eq:sq_B_K_estimate_2}
        (\eps_1 + \eps_0)^4 S_J + S_K \ge (\eps_1 + \eps_0)^2 \cdot 2\abs{J}.
    \end{equation}
    Combining \eqref{eq:sq_B_K_estimate_1} and \eqref{eq:sq_B_K_estimate_2}, we can estimate $S_K$ as follows:
    \begin{multline*}
        S_K \le \frac{\mleft(\eps_1+\eps_0\mright)^4\mleft(\eps_1^2 + \eps_0^2\mright) - (\eps_1 + \eps_0)^2}{\mleft(\eps_1+\eps_0\mright)^4 - 1} \cdot 2\abs{J} \\
        \stackrel{\eqref{eq:eps_1_eps_0_identity}}{=} \frac{(\eps_1 + \eps_0)^2(\eps_1^2 + \eps_0^2) - (\eps_1 + \eps_0)^2(\eps_1 - \eps_0)^2}{(\eps_1 + \eps_0)^2 - (\eps_1 - \eps_0)^2} \cdot 2\abs{J} = (\eps_1 + \eps_0)^2 \abs{J}.
    \end{multline*}
    Substituting this into \eqref{eq:sq_B_K_estimate}, we obtain
    \[
        B_K \le \mleft(\frac{(\eps_1 + \eps_0)^2 \abs{J}}{\abs{K}}\mright)^{\abs{K}/2} = \mleft(\mleft(\frac{(\eps_1 + \eps_0)^2}{\abs{K}/\abs{J}}\mright)^{\abs{K}/\abs{J}}\mright)^{\abs{J}/2}
    \]
    One checks that the function $x \mapsto (a/x)^x$ is increasing on $(0, a/e)$. As $(\eps_1 + \eps_0)^2 > e$ and $\abs{K}/\abs{J} \le 1$ by \eqref{eqn:sq_J_estimate}, we have
    \begin{equation} \label{eq:sq_B_K_estimate_3}
        B_K \le \mleft(\mleft(\eps_1 + \eps_0\mright)^2\mright)^{\abs{J}/2} = (\eps_1 + \eps_0)^{\abs{J}}.
    \end{equation}

    Combining \eqref{eq:sq_B_J_estimate_2} and \eqref{eq:sq_B_K_estimate_3}, we obtain
    \[
        B_J B_K < (\eps_1 - \eps_0)^{\abs{J}} (\eps_1 + \eps_0)^{\abs{J}} \stackrel{\eqref{eq:eps_1_eps_0_identity}}{=} 1,
    \]
    which contradicts \eqref{eq:sq_B_estimate}.
\end{proof}

\section{Upper bound on average energy} \label{sec:bounds_avg_energy}

To prove \cref{thm:avg_energy}, the key analytic ingredient is a carefully chosen polynomial with no $x^3$ term.

\begin{lemma}\label{lem:polynomial_bound}
    Let $d$ be an integer at least $2$, and define
    \[
        f(x) = \mleft(x - d\mright)\mleft(x - \sqrt{d - 1}\mright)^2 \mleft(x + d + 2\sqrt{d - 1}\mright).
    \]
    Then, for every graph $G$ of maximum degree $d$, the spectrum $\sset{\lambda_i}$ and the average energy $\avge$ of $G$ satisfy
    \begin{equation} \label{eq:polynomial_bound}
        \frac{1}{n}\sum_i f(\abs{\lambda_i}) \ge 2\sqrt{d-1}\mleft(d + \sqrt{d-1}\mright)^2 \mleft(\avge - \sqrt{d-1} - \frac{1}{d + \sqrt{d-1}} \mright).
    \end{equation}
\end{lemma}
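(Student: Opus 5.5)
The plan is to expand $f$ explicitly, use that it has no $x^3$ term, and reduce \eqref{eq:polynomial_bound} to the moment bounds of \cref{lem:moments} plus a one-variable quadratic inequality in the average degree $\dbar$.

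Set $\eps_0 = \sqrt{d-1}$. Pair the factors as
\[
f(x) = \mleft(x^2 - 2\eps_0 x + \eps_0^2\mright)\mleft(x^2 + 2\eps_0 x - d(d+2\eps_0)\mright).
\]
This shows the $x^3$ coefficient vanishes. Reading off the remaining coefficients gives
\[
f(x) = x^4 + a_2 x^2 + a_1 x + a_0,
\]
with
\[
a_1 = 2\eps_0(d+\eps_0)^2, \qquad a_2 = -d^2 - 2d\eps_0 - 3(d-1), \qquad a_0 = -\eps_0^2 d(d+2\eps_0).
\]
The key point is that $a_1$ is exactly the slope $2\sqrt{d-1}(d+\sqrt{d-1})^2$ appearing on the right of \eqref{eq:polynomial_bound}.

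Next, evaluate at $\abs{\lambda_i}$ and average. Even powers of $\abs{\lambda_i}$ equal those of $\lambda_i$, and $\frac1n\sum_i \abs{\lambda_i} = \avge$. Hence
\[
\frac1n\sum_i f(\abs{\lambda_i}) = \frac1n\sum_i \lambda_i^4 + a_2 \dbar + a_1 \avge + a_0 \ge 2\dbar^2 - \dbar + a_2\dbar + a_1\avge + a_0,
\]
where the inequality uses \cref{lem:moments}. This is where the missing cubic term matters: it lets us avoid any information on $\sum_i \abs{\lambda_i}^3$. The $a_1\avge$ terms cancel against the right-hand side, so it remains to prove the purely numerical inequality
\[
q(\dbar) := 2\dbar^2 + (a_2 - 1)\dbar + a_0 \ge -2\eps_0(d+\eps_0)\mleft(\eps_0(d+\eps_0) + 1\mright) \qquad \text{for } 0 \le \dbar \le d.
\]

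The quadratic $q$ has its vertex at $(1-a_2)/4 = (d^2 + 2d\eps_0 + 3d - 2)/4$. This is at least $d$ for $d \ge 2$, so $q$ is decreasing on $[0,d]$ and its minimum there is $q(d)$. It then suffices to check $q(d)$ against the right-hand side using $\eps_0^2 = d-1$. Both sides simplify to $-2d^3 - 4d^2\eps_0 + 2d\eps_0 + 2d$, so the inequality holds, with equality at $\dbar = d$.

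The only real obstacle is this final algebraic verification: reducing every $\eps_0^2$ to $d-1$ and confirming that the $d^2$ terms cancel. The monotonicity step and the coefficient extraction are routine.
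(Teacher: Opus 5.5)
Your proof is correct and is essentially the paper's argument: expand $f$ (no $x^3$ term, so the linear coefficient $2\sqrt{d-1}(d+\sqrt{d-1})^2$ multiplies $\avge$), bound $\frac1n\sum_i\lambda_i^4$ from below by \cref{lem:moments}, use that the resulting quadratic in $\dbar$ is decreasing up to its vertex $(d^2+2d\sqrt{d-1}+3d-2)/4\ge d$, and verify equality at $\dbar=d$. Your vertex check covers all $d\ge 2$, as the lemma's hypothesis requires, whereas the paper only asserts it for $d\ge 3$.
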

\begin{proof}
    Expanding $f(x)$ yields $x^4 - \alpha x^2 + \beta x - \gamma$, where the coefficients are defined by
    \[
        \alpha = d^2 + 3d - 3 + 2d\sqrt{d-1}, \quad \beta = 2\sqrt{d-1}\mleft(d+\sqrt{d-1}\mright)^2, \quad \gamma = d(d-1)(d + 2\sqrt{d-1}).
    \]
    The left hand side of \cref{lem:polynomial_bound} can be rewritten as
    \[
        \frac{1}{n}\sum_i \lambda_i^4 - \alpha \cdot \frac{1}{n}\sum_i \lambda_i^2 + \beta \avge - \gamma,
    \]
    which by \cref{lem:moments} is at least
    \begin{equation} \label{eq:polynomial_bound_inequality}
        2\dbar^2 - \dbar - \alpha \dbar + \beta \avge - \gamma,
    \end{equation}
    where $\dbar$ is the average degree of $G$.
    
    Notice that the quadratic function $x \mapsto 2x^2 - (1+\alpha)x$ is decreasing on the interval $(-\infty, (1+\alpha)/4)$. One checks that $(1+\alpha)/4 > d$ for $d \ge 3$. Since $\dbar \le d < (1+\alpha)/4$, \eqref{eq:polynomial_bound_inequality} is at least $2d^2 - d - \alpha d + \beta \avge - \gamma$, which is equal to the right-hand side of \eqref{eq:polynomial_bound}.
\end{proof}

\begin{proof}[Proof of \cref{thm:avg_energy}]
    Let $f$ be the polynomial defined in \cref{lem:polynomial_bound}. Since $\abs{\lambda_i} \le d$ for every $i$, the definition of $f$ gives $f(\abs{\lambda_i}) \le 0$. Substituting this into \eqref{eq:polynomial_bound} completes the proof.
\end{proof}

The bound on the median eigenvalues in \cref{cor:median_energy_bound} follows from the following result.

\begin{theorem}[Theorem 3.1 of Li, Li, Shi, and Gutman \cite{LLSG13}] \label{thm:median_energy_bound}
    For every graph, the median eigenvalues are at most the average energy of the graph in absolute value.
\end{theorem}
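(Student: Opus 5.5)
We must show $|\lambda_h|,|\lambda_\ell| \le \avge$. The plan is to use the trace identity $\sum_i \lambda_i = 0$ from \cref{lem:moments} together with a counting argument on the positions of the median indices. By symmetry (replace the spectrum by its negative, which preserves both the average energy and the multiset of absolute values, and swaps the roles of $h$ and $\ell$), it suffices to prove $\lambda_\ell \ge -\avge$ and $\lambda_h \le \avge$. Since $\lambda_h \ge \lambda_\ell$, it is enough to treat the case where the relevant median eigenvalue has the sign that makes the inequality nontrivial. So assume $\lambda_h = \lambda > 0$; we will show $\lambda \le \avge$. The case $\lambda_\ell < 0$ is symmetric.

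\textbf{Key step.} Since $\lambda_1 \ge \cdots \ge \lambda_h = \lambda > 0$, the positive part of the spectrum satisfies
\[
    \sum_{i \colon \lambda_i > 0} \lambda_i \ge \sum_{i=1}^{h} \lambda_i \ge h\lambda.
\]
Because $\sum_i \lambda_i = 0$, the positive eigenvalues and the absolute values of the negative eigenvalues have equal sums. Hence
\[
    \sum_i \abs{\lambda_i} = 2\sum_{\lambda_i>0}\lambda_i \ge 2h\lambda .
\]
Dividing by $n$ gives $\avge \ge (2h/n)\,\lambda$. Since $h = \lfloor (n+1)/2\rfloor \ge n/2$, we have $2h/n \ge 1$, and therefore $\avge \ge \lambda$.

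\textbf{The negative side.} For $\lambda_\ell = -\mu < 0$, the same argument applies to the indices $\ell,\dots,n$, of which there are $n-\ell+1 = n+1-\lceil (n+1)/2\rceil = \lfloor (n+1)/2\rfloor \ge n/2$. This gives
\[
    \sum_i \abs{\lambda_i} = 2\sum_{\lambda_i<0}\abs{\lambda_i} \ge 2(n-\ell+1)\mu \ge n\mu .
\]

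\textbf{Combining the cases.} If $\lambda_h \le 0$, then $|\lambda_h| = -\lambda_h \le -\lambda_\ell$, so the bound on $|\lambda_h|$ reduces to the negative case for $\lambda_\ell$. Likewise, if $\lambda_\ell \ge 0$, then $|\lambda_\ell| = \lambda_\ell \le \lambda_h$, which reduces to the positive case.

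The only delicate point is the index counting for odd versus even $n$: we need at least $n/2$ eigenvalues on the appropriate side of the median. This holds because $h \ge n/2$ and $n-\ell+1 \ge n/2$, so there is no real obstacle.
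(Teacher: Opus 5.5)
Your proof is correct and follows essentially the same route as the paper. Use $\sum_i \lambda_i = 0$ to write $n\avge$ as twice the sum of the positive eigenvalues (or of the absolute values of the negative ones). Then bound that sum below by $h\lah$ (respectively $(n-\ell+1)\abs{\lal}$), using $h \ge n/2$ and $n-\ell+1 \ge n/2$. The remaining sign cases are trivial because $\lal \le \lah$.
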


\begin{proof}[Proof of \cref{cor:median_energy_bound}]
    It follows immediately from \cref{thm:avg_energy,thm:median_energy_bound}.
\end{proof}

Although the original statement of \cref{thm:median_energy_bound} is for connected graphs, its proof does not require this assumption. For completeness, we provide a streamlined proof.

\begin{proof}[Proof of \cref{thm:median_energy_bound}]
    Let $G$ be a graph on $n$ vertices, let $\lambda_1 \ge \dots \ge \lambda_n$ be its eigenvalues, and let $\lah$ and $\lal$ be the median eigenvalues of $G$. Since $\sum_i \lambda_i = 0$ by \cref{lem:moments}, we have
    \[
    \avge = \frac{2}{n} \sum_{i \colon \lambda_i \ge 0} \lambda_i = \frac{-2}{n}\sum_{i \colon \lambda_i \le 0} \lambda_i.
    \]
    Recall that $\lah$ and $\lal$ are the median eigenvalues of $G$, where $h = \lfloor (n+1)/2\rfloor$ and $\ell = \lceil (n+1)/2\rceil$. We first prove that $\lah \le \avge$. If $\lah \ge 0$, then $h \ge n/2$, and hence
    \[
        \avge = \frac{2}{n} \sum_{i \colon \lambda_i \ge 0} \lambda_i \ge \frac{2h \lah}{n} \ge \lah.
    \]
    Otherwise, if $\lah < 0$, then $\lah \le \avge$ holds trivially. The proof of $\lal \ge -\avge$ is analogous.
\end{proof}

\section{\texorpdfstring{Lower bound when $d$ is at least $75$}{Lower bound when d is at least 75}} \label{sec:lower_bound_d_large}

Finally, we present the proof of \cref{thm:lower_bound_d_large}, which reuses a couple of ideas from the proof of \cref{thm:lower_bound_d-1_perfect_square}. The proof of the following optimization problem is deferred to \cref{sec:optimization}.

\begin{proposition} \label{prop:product_bound}
    Fix an integer $d \ge 75$. Set \[
        \eps_0 = \sqrt{d-1} \quadand \eps_1 = \sqrt{d-1} + \frac{1}{d + \sqrt{d-1}}.
    \] Then there exists $\delta \in (\eps_0, d)$ such that the maximum value of 
    \[
        (2\eps_0)^{x + y + z}\mleft(\frac{\eps_0+\eps_1}{2\eps_0}\mright)(\eps - \eps_0)^x \mleft(\frac{\alpha(\delta) \cdot (\eps_1-\eps)}{y}\mright)^{y/2}(d - \eps_0)^z
    \]
    under the constraint that
    \[
        \eps_0 \le \eps \le \eps_1, \quad x \ge \frac{1}{2}, \quad 0 \le y \le \frac{1}{2} - z, \quad 0 \le z \le \frac{1}{(\delta-\eps_0)^2},
    \] is strictly less than $1$, where $\alpha(\delta)$ is defined by
    \[
        \alpha(\delta) = \frac{2\eps_0(d+\eps_0)^2}{(d-\delta)(d + 2\eps_0 + \delta)}.
    \]
\end{proposition}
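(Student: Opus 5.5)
The plan is to take logarithms and optimize one variable at a time. Write $\Phi$ for the logarithm of the objective:
\[
    \Phi = \ln\frac{\eps_0+\eps_1}{2\eps_0} + x\ln\bigl(2\eps_0(\eps-\eps_0)\bigr) + \Bigl(y\ln(2\eps_0) + \frac{y}{2}\ln\frac{\alpha(\delta)(\eps_1-\eps)}{y}\Bigr) + z\ln\bigl(2\eps_0(d-\eps_0)\bigr).
\]
We must exhibit a $\delta$ for which $\sup\Phi<0$ on the feasible region.

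\emph{The variable $x$.} Since $\eps-\eps_0\le \eps_1-\eps_0 = 1/(d+\eps_0)$, we have $2\eps_0(\eps-\eps_0)\le 2\eps_0/(d+\eps_0)<1$. So the coefficient of $x$ is negative and the supremum is attained at $x=1/2$.

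\emph{The variables $y$ and $z$.} The coefficient of $z$ is $\ln(2\eps_0(d-\eps_0))>0$. The only cost of increasing $z$ is that it shrinks the admissible range $y\le 1/2-z$. I would crudely decouple the two: bound the $z$-term by $z_{\max}\ln(2\eps_0 d)$ with $z_{\max}=1/(\delta-\eps_0)^2$, and let $y$ range over $[0,1/2]$. The $y$-term $g(y)=\tfrac{y}{2}\ln\bigl(4\eps_0^2\alpha(\delta)(\eps_1-\eps)/y\bigr)$ is concave. Its unconstrained maximizer is $y^*=4\eps_0^2\alpha(\delta)(\eps_1-\eps)/e$, with value $y^*/2$. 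Hence $g\le \min\{y^*/2,\ g(1/2)\}$, the second option applying when $y^*>1/2$.

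\emph{Choice of $\delta$.} I would take $\delta = d/2$, or possibly $\delta=d-c\sqrt d$, tuned at the end. With $\delta=d/2$ we get $z_{\max}=O(1/d^2)$, so the $z$-contribution is $O(\ln d/d^2)$, which is negligible. At the same time $\alpha(\delta)=2\eps_0(d+\eps_0)^2/\bigl((d/2)(3d/2+2\eps_0)\bigr)\approx \tfrac{8}{3}\eps_0$ stays of order $\sqrt d$.

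\emph{Reduction to one variable.} After these steps $\Phi$ is bounded by an explicit function of the single variable $t=\eps-\eps_0\in[0,1/(d+\eps_0)]$:
\[
    \tfrac12\ln(2\eps_0 t) + \tfrac{y}{2}\ln\frac{4\eps_0^2\alpha(1/(d+\eps_0)-t)}{y} + \ln\frac{\eps_0+\eps_1}{2\eps_0} + O\Bigl(\frac{\ln d}{d^2}\Bigr),
\]
with $y$ chosen optimally in $[0,1/2]$. A heuristic count at $y=1/2$ and $t$ of order $1/d$ gives $-\tfrac14\ln d + \tfrac18\ln d + O(1)$, which tends to $-\infty$. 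So the claim holds for all large $d$, and the work is to make this effective down to $d=75$. I would parametrize $t=s/(d+\eps_0)$ with $s\in[0,1]$, which normalizes the dependence on $d$. Then I would maximize the concave-in-$s$ expression exactly: differentiate, solve for the critical $s$ in both the regime $y=y^*$ and the regime $y=1/2$, and obtain a closed-form upper bound $\Psi(d)$. Finally I would show $\Psi$ is decreasing in $d$ and verify $\Psi(75)<0$ numerically.

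\emph{Main obstacle.} The constant-level bookkeeping near $d=75$ is where I expect trouble: the margin $-\tfrac18\ln d+O(1)$ is small there. The crude decoupling of $y$ and $z$ and the specific choice of $\delta$ may waste too much. If the check at $d=75$ fails, I would first optimize $\delta$ numerically, since moving $\delta$ toward $d$ shrinks $z_{\max}$ further but inflates $\alpha$. If that is still not enough, I would keep the exact coupling $y\le 1/2-z$ instead of decoupling.
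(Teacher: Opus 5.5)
Your skeleton matches the paper's: take logs, set $x=\tfrac12$, optimize $\eps$ exactly, then handle $y$ and $z$, choose $\delta$, and check numerically at the bottom of the range. But both concrete choices you lead with fail at $d=75$, where the true margin is only about $5\times10^{-4}$.

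\emph{The choice $\delta=d/2$.} This is ruled out outright. The point $x=y=\tfrac12$, $z=0$, $\eps=\eps_0+\tfrac23(\eps_1-\eps_0)$ is feasible for every $\delta$. There the objective equals $(\eps_0+\eps_1)\bigl(\tfrac23(\eps_1-\eps_0)\bigr)^{1/2}\bigl(\tfrac23\alpha(\delta)(\eps_1-\eps_0)\bigr)^{1/4}$. At $d=75$, $\delta=37.5$ this is about $1.024>1$. Since $\alpha$ is increasing in $\delta$, you must lower $\delta$, paying with a larger $1/(\delta-\eps_0)^2$. The paper takes $\alpha(\delta)(\eps_1-\eps_0)=\tfrac14$, i.e.\ $\delta=\sqrt{(d+\eps_0)(d-7\eps_0)}-\eps_0\approx 26.55$, and with this choice the maximum is about $0.9995$.

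\emph{The decoupling of $y$ and $z$.} This fails for every $\delta$. Your bound is the maximum of the $z=0$ objective (attained at $y=\tfrac12$) times $(2\eps_0 d)^{1/(\delta-\eps_0)^2}$. Even minimized over $\delta$, this is about $e^{0.004}>1$ at $d=75$. Decoupling forgets that each unit of $z$ removes a unit of $y$. The marginal worth of $y$ in the exponent is $\ln(2\eps_0)+\tfrac12\ln\frac{\alpha(\delta)(\eps_1-\eps_0)}{3/2}-\tfrac12\approx 1.45$. That over-counts by roughly $0.005$ in the exponent, well beyond the available margin.

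So your last-resort fallback is not optional, and it is exactly the paper's proof:
\begin{enumerate}
  \item After optimizing $\eps$, show the objective is increasing in $y$ on $[0,\tfrac12-z]$. It suffices that $(2\eps_0)^2\alpha(\delta)(\eps_1-\eps_0)>3e/2$. Set $y=\tfrac12-z$.
  \item Show the result is increasing in $z$. It suffices that $d-\eps_0>e^{-1/2}$. Set $z=1/(\delta-\eps_0)^2$.
  \item Only then choose $\delta$.
\end{enumerate}

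Your plan to prove a closed form $\Psi(d)$ decreasing in $d$ and check only $\Psi(75)$ is also not automatic. Suppose $\alpha(\delta)(\eps_1-\eps_0)$ is held at a constant $c$. Then the value at $(y,z)=(\tfrac12,0)$ tends to $2\sqrt{2/3}\,(2c/3)^{1/4}$ as $d\to\infty$, which exceeds $1$ for $c=\tfrac14$. Meanwhile $c=\tfrac15$ is not attainable at $d=75$, since it requires $d>9\eps_0$. This is why the paper uses two regimes: $c=\tfrac14$ with a case-by-case numerical check for $75\le d\le 139$, and $c=\tfrac15$ with the explicit bound $889/891$ for $d\ge 140$.
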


\begin{proof}[Proof of \Cref{thm:lower_bound_d_large}]
    Fix an integer $d \ge 75$. Let $G$ be a graph of maximum degree $d$, let $\lambda_1 \ge \lambda_2 \ge \cdots \ge \lambda_n$ be the eigenvalues of its adjacency matrix $A_G$, let $\lah$ and $\lal$ be the median eigenvalues of $G$, and let $\eps$ be the average energy of $G$. We define the constants $\eps_0$ and $\eps_1$ as in \cref{prop:product_bound}.
    Assume for the sake of contradiction that the lower median eigenvalue $\lal$ is less than $-\eps_0$. By \cref{thm:median_energy_bound}, we are done as soon as $\eps \le \eps_0$. Hereafter, in view of \cref{thm:avg_energy}, we assume that
    \begin{equation}\label{eq:average_energy inequality}
        \eps_0 \le \eps \le \eps_1.
    \end{equation}
    
    Let $I$ denote the set of indices $i$ such that $\abs{\lambda_i} \neq \eps_0$. Since $\prod_{i \in I} (x - \lambda_i^2) \in \mathbb{Z}[x]$, we deduce that $\prod_{i \in I} \mleft(\eps_0^2 - \lambda_i^2\mright)$ is a nonzero integer.
    Consequently,
    \begin{equation} \label{eq:product_at_least_one}
        \prod_{i \in I} \abs*{\eps_0^2 - \lambda_i^2} \ge 1.
    \end{equation}
    We shall derive a contradiction by showing that the last inequality fails whenever $d \ge 75$. We factor the above product as
    \[
        A := \prod_{i \in I} \abs*{\eps_0 + \abs{\lambda_i}} \quadand
        B := \prod_{i \in I} \abs*{\eps_0 - \abs{\lambda_i}}.
    \]
    
    We first estimate $A$. Applying the inequality of arithmetic and geometric means, we obtain
    \begin{equation} \label{eq:A_estimate_1}
        A \le \mleft(\eps_0 + \frac{1}{\abs{I}}\sum_{i \in I} \abs{\lambda_i}\mright)^{\abs{I}}.
    \end{equation}
    Since $\lal < -\eps_0$, we have $\sset{\ell, \dots, n} \subseteq I$, and so $\abs{I} \ge n/2$. Since
    \[
        \eps = \frac{1}{n}\mleft(\sum_{i \in I} \abs{\lambda_i} + \sum_{i \notin I} \abs{\lambda_i}\mright) = \frac{1}{n}\mleft(\sum_{i \in I} \abs{\lambda_i} + (n - \abs{I})\eps_0\mright),
    \]
    we obtain
    \[
        \eps_0 + \frac{1}{\abs{I}} \sum_{i \in I} \abs{\lambda_i} = \eps_0 + \frac{1}{\abs{I}}\mleft(n\eps - \mleft(n - \abs{I}\mright)\eps_0\mright) = 2\eps_0 + \frac{n}{\abs{I}}(\eps - \eps_0).
    \]
    One checks that the function $x \mapsto (1 + 1/x)^x$ is increasing on $[0, \infty)$. Since $\abs{I} \le n$, we have the estimate
    \begin{multline} \label{eq:A_estimate}
        A \le \mleft(2\eps_0 + \frac{n}{\abs{I}}(\eps - \eps_0)\mright)^{\abs{I}} = (2\eps_0)^{\abs{I}} \mleft(1 + \frac{\eps - \eps_0}{2\eps_0}\cdot\frac{n}{\abs{I}}\mright)^{\abs{I}} \le (2\eps_0)^{\abs{I}} \mleft(1 + \frac{\eps - \eps_0}{2\eps_0}\mright)^n \\
        \stackrel{\eqref{eq:average_energy inequality}}{\le} (2\eps_0)^{\abs{I}} \mleft(1 + \frac{\eps_1 - \eps_0}{2\eps_0}\mright)^n = (2\eps_0)^{\abs{I}} \mleft(\frac{\eps_0+\eps_1}{2\eps_0}\mright)^n.
    \end{multline}
    
    We now turn to $B$. Let $J = \dset{i}{\lambda_i < -\eps_0}$. Notice that $J \subseteq \sset{\ell, \dots, n}$, and in particular,
    \begin{equation} \label{eqn:J_at_least_half}
        J \subseteq I \quadand \abs{J} \ge n/2.
    \end{equation}
    We estimate part of $B$ corresponding to $J$ using the inequality of arithmetic and geometric means:
    \[
        B_J := \prod_{i \in J} \abs*{\eps_0 - \abs{\lambda_i}} = \prod_{i \in J} \mleft(\abs{\lambda_i} - \eps_0 \mright) \le \mleft(\frac{1}{\abs{J}}\sum_{i \in J} \abs{\lambda_i} - \eps_0\mright)^{\abs{J}}.
    \]
    Since $\lambda_i$ with $i \in J$ have the same sign and $\sum_i \lambda_i = 0$ by \cref{lem:moments}, we have
    \[
        \sum_{i \in J} \abs{\lambda_i} = \abs*{\sum_{i \in J} \lambda_i} = \frac{1}{2}\mleft(\abs*{\sum_{i \in J} \lambda_i} + \abs*{\sum_{i \notin J} \lambda_i}\mright) \le \frac{1}{2}\sum_i \abs{\lambda_i} = \frac{1}{2} n\eps \le \abs{J}\eps,
    \]
    which implies that
    \begin{equation}\label{eq:B_J_estimate}
        B_J \le \mleft(\eps - \eps_0\mright)^{\abs{J}}.
    \end{equation}

    We shall further divide $I \setminus J$ into two subsets $K$ and $L$ as follows:
    \[
        K = \sset{i \in I \setminus J : \abs{\lambda_i} \le \delta} \quadand L = \sset{i \in I \setminus J : \abs{\lambda_i} > \delta},
    \]
    where $\delta \in (\eps_0, d)$ is a parameter to be determined later, and it depends only on $d$.
    
    To estimate the part of $B$ corresponding to $K$, we apply the inequality of arithmetic and geometric means:
    \begin{equation} \label{eq:B_K_estimate_1}
        B_K := \prod_{i \in K} \abs*{\eps_0 - \abs{\lambda_i}} = \mleft(\prod_{i \in K} \mleft(\eps_0 - \abs{\lambda_i}\mright)^2\mright)^{1/2} \le \mleft(\frac{1}{\abs{K}}\mleft(\sum_{i \in K} \mleft(\eps_0 - \abs{\lambda_i}\mright)^2\mright)\mright)^{\abs{K}/2}.
    \end{equation}
    Define $g(x)$ as follows:
    \[
        g(x) = (d - x)\mleft(\eps_0 - x\mright)^2\mleft(x + d + 2\eps_0\mright).
    \]
    Clearly $g(x) = -f(x)$, where $f(x)$ is defined in \cref{lem:polynomial_bound}, and $g(x) \ge 0$ for all $x \in [0, d]$. By \cref{lem:polynomial_bound}, we have
    \begin{equation}\label{eq:g_estimate}
        \frac{1}{n}\sum_i g(\abs{\lambda_i}) \le 2\eps_0\mleft(d+\eps_0\mright)^2 \mleft(\eps_1 - \eps\mright).
    \end{equation}
    Observe that the quadratic function $x \mapsto (d-x)(x+d+2\eps_0)$ is decreasing on $(0,d)$. For each $i\in K$, since $0 \le \abs{\lambda_i} \le \delta < d$, we have
    \[
        \mleft(\eps_0 - \abs{\lambda_i}\mright)^2 = \frac{g(\abs{\lambda_i})}{(d - \abs{\lambda_i})(\abs{\lambda_i} + d + 2\eps_0)} \le\frac{g(\abs{\lambda_i})}{(d-\delta)(\delta + d + 2\eps_0)}.
    \]
    Summing over $i\in K$, we obtain
    \begin{multline*}
        \sum_{i\in K} \mleft(\eps_0 - \abs{\lambda_i}\mright)^2 \le \frac{\sum_{i \in K} g(\abs{\lambda_i})}{(d-\delta)(\delta + d + 2\eps_0)} \le \frac{\sum_i g(\abs{\lambda_i})}{(d-\delta)(\delta + d + 2\eps_0)} \\
        \stackrel{\eqref{eq:g_estimate}}{\le} \frac{2\eps_0\mleft(d+\eps_0\mright)^2}{(d-\delta)(\delta + d + 2\eps_0)} \mleft(\eps_1 - \eps\mright) n = \alpha(\delta) \cdot (\eps_1 - \eps) \cdot n,
    \end{multline*}
    where $\alpha(\delta)$ is defined in \cref{prop:product_bound}.
    Substituting the last estimate into \eqref{eq:B_K_estimate_1}, we obtain that
    \begin{equation}\label{eq:B_K_estimate}
        B_K \le \mleft(\frac{\alpha(\delta) \cdot (\eps_1 - \eps) n}{\abs{K}}\mright)^{\abs{K}/2}.
    \end{equation}
    
    To estimate part of $B$ corresponding to $L$, we simply use $\abs{\lambda_i} \le d$ to get that
    \begin{equation}\label{eq:B_L_estimate}
        B_L \le (d - \eps_0)^{\abs{L}}.
    \end{equation}
    We also estimate $\abs{L}$ here. Using the fact that $\sum_i \lambda_i^2$ equals the degree sum of $G$, which is bounded by $nd$, we obtain that
    \[
        \sum_{i \in L}(\eps_0 - \abs{\lambda_i})^2 \le \sum_i(\eps_0 - \abs{\lambda_i})^2 \le n\eps_0^2 - 2\eps_0\sum_i \abs{\lambda_i} + \sum_i \lambda_i^2 \le n(2d-1-2\eps_0\eps) \stackrel{\eqref{eq:average_energy inequality}}{\le} n(2d-1-2\eps_0^2) = n.
    \]
    Since $\abs{\lambda_i} > \delta > \eps_0$ for $i \in L$, the above inequality implies that
    \begin{equation}\label{eq:B_L_estimate_2}
        \abs{L} \le \frac{n}{(\delta-\eps_0)^2}.
    \end{equation}
    
    Set $x = \abs{J}/n$, $y = \abs{K}/n$, and $z = \abs{L}/n$. Then $\abs{I}/n = x + y + z \le 1$. In view of \eqref{eq:average_energy inequality}, \eqref{eqn:J_at_least_half}, and \eqref{eq:B_L_estimate_2}, the parameters $\eps, x, y, z$ satisfy the constraints in \cref{prop:product_bound}. Combining the bounds \eqref{eq:A_estimate}, \eqref{eq:B_J_estimate}, \eqref{eq:B_K_estimate}, and \eqref{eq:B_L_estimate}, we obtain that
    \begin{align*}
        \mleft(\prod_{i \in I} \abs*{d-1 - \lambda_i^2}\mright)^{1/n} & = \mleft(A B_J B_K B_L\mright)^{1/n} \\
        & \le (2\eps_0)^{x+y+z} \mleft(\frac{\eps_0 + \eps_1}{2\eps_0}\mright) \mleft(\eps - \eps_0\mright)^x \mleft(\frac{\alpha(\delta)\cdot (\eps_1-\eps)}{y}\mright)^{y/2} \mleft(d - \eps_0\mright)^z.
    \end{align*}
    According to \cref{prop:product_bound}, there exists $\delta \in (\eps_0, d)$ such that the right hand side of the last inequality is strictly less than $1$, which contradicts \eqref{eq:product_at_least_one}.
\end{proof}

\section{Concluding remarks} \label{sec:further_remarks}

Our results confirm Mohar's conjecture for all but $64$ values of $d$ --- the exceptions are $d \in \sset{4, \dots, 74} \setminus \sset{5, 10, 17, 26, 37, 50, 65}$. In particular, it would be interesting to settle his conjecture for $d = 4$.

\begin{conjecture}
    The median eigenvalues of any graph of maximum degree $4$ are at least $-\sqrt{3}$.
\end{conjecture}

Inspired by the result of \cite{AJJ25} on subcubic graphs, it is natural to conjecture that the bound can be strengthened by excluding the incidence graph of the projective plane of order $3$.

\begin{conjecture}
    The median eigenvalues of any graph of maximum degree $4$, except for a vertex-disjoint union of incidence graphs of projective planes of order $3$, are at most $\sqrt{2}$ in absolute value.
\end{conjecture}

For graphs of maximum degree $5$, \cref{thm:upper_bound,thm:lower_bound_d-1_perfect_square} already imply that the median eigenvalues are bounded in absolute value by $2$. We note that the incidence graph of the projective plane of order $4$ is not the only connected $5$-regular graph attaining this bound. Indeed, the Cayley graph of the group $\mathbb{Z}/12\mathbb{Z}$ generated by $\sset{\pm 3, \pm 4, 6}$ is a connected $5$-regular graph whose median eigenvalues are $-2$ and $+1$.

A significant related result is due to Mohar and Tayfeh-Rezaie~\cite{MT15}, who proved that for every integer $d \ge 3$, the median eigenvalues of any \emph{bipartite} graph $G$ of maximum degree $d$ are at most $\sqrt{d-2}$ in absolute value, unless $G$ is the vertex-disjoint union of incidence graphs of projective planes of order $d-1$, in which case the median eigenvalues are $\pm\sqrt{d-1}$.

It is natural to relax the condition on maximum degree to one on average degree. From \cref{thm:mc-clelland-bound,thm:median_energy_bound}, we obtain the following bound on the median eigenvalues.

\begin{corollary}
    The median eigenvalues of any graph of average degree $d$ are at most $\sqrt{d}$ in absolute value. \qed
\end{corollary}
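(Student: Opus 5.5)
The corollary follows by combining \cref{thm:mc-clelland-bound} with \cref{thm:median_energy_bound}, exactly as \cref{cor:median_energy_bound} was obtained from \cref{thm:avg_energy}.

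Let $G$ be a graph on $n$ vertices with average degree $d$. The proof has two steps.

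\begin{enumerate}
    \item Apply \cref{thm:mc-clelland-bound}. Its hypothesis is that the average degree is at most $d$, and this holds with equality. So the average energy satisfies $\avge \le \sqrt{d}$. The underlying reason is that $\sum_i \lambda_i^2 = dn$ by \cref{lem:moments}, and the Cauchy--Schwarz inequality then gives $\frac1n\sum_i\abs{\lambda_i} \le \sqrt{d}$.
    \item Invoke \cref{thm:median_energy_bound}. It gives $\lah \le \avge$ and $\lal \ge -\avge$ for every graph. The streamlined proof given earlier does not use connectivity, so it applies to $G$ directly.
\end{enumerate}

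Chaining the two steps yields $-\sqrt d \le -\avge \le \lal \le \lah \le \avge \le \sqrt d$. This is exactly the claim.

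The argument has no real obstacle. The only points to check are that the inequality $\sum_i\lambda_i^2 \le dn$ is used in the direction the McClelland bound requires, and that no regularity or maximum-degree hypothesis is needed anywhere. Both steps rely only on the trace identity for $\sum_i \lambda_i^2$ and on $\sum_i\lambda_i=0$, and both of these hold for arbitrary graphs.
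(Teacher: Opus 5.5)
Your proposal is correct and matches the paper's argument: the paper obtains this corollary in one line by combining \cref{thm:mc-clelland-bound} with \cref{thm:median_energy_bound}. Your chain $-\sqrt d \le -\avge \le \lal \le \lah \le \avge \le \sqrt d$ also gives both sides of the absolute-value bound for both median eigenvalues.
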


\cref{thm:lower_bound_triangle_free} suggests a stronger conjecture that improves the bound from $\sqrt{d}$ to $\sqrt{d-1}$.

\begin{conjecture}
    For every real number $d \ge 2$, the median eigenvalues of any graph of average degree at most $d$ are at most $\sqrt{d-1}$ in absolute value.
\end{conjecture}

We point out that the conjecture fails for $d \in [1,2)$. For example, the vertex-disjoint union of $a$ triangles and $b$ single edges has average degree $(6a+2b)/(3a+2b)$, but the median eigenvalues are $1$ in absolute value.

\section*{Acknowledgements}

The authors gratefully acknowledge the Simons Laufer Mathematical Sciences Institute (SLMath) for supporting travel and accommodation during the program \emph{Algebraic and Analytic Methods in Combinatorics}.

\bibliographystyle{plain}
\bibliography{homolumo}

\appendix

\section{Optimization} \label{sec:optimization}

\begin{proof}[Proof of \cref{prop:product_bound}]
    Fix an integer $d \ge 75$. We shall choose $\delta \in (\eps_0, d)$ later so that
    \begin{equation}\label{eq:delta_constraint}
        \alpha(\delta) \cdot (\eps_1-\eps_0) \ge 1/5 \quadand \delta - \eps_0 \ge \sqrt2.
    \end{equation}
    
    First, when $\eps, y, z$ are fixed, the function
    \begin{equation}\label{eq:first_function}
        (2\eps_0)^{x+y+z} \mleft(\frac{\eps_0 + \eps_1}{2\eps_0}\mright) \mleft(\eps - \eps_0\mright)^x \mleft(\frac{\alpha(\delta) \cdot (\eps_1-\eps)}{y}\mright)^{y/2}(d - \eps_0)^z
    \end{equation}
    is proportional to $(2\eps_0)^x(\eps-\eps_0)^x$, which is a decreasing function of $x$ because
    \[
        2\eps_0(\eps - \eps_0) \le 2\eps_0(\eps_1 - \eps_0) = \frac{2\sqrt{d-1}}{d + \sqrt{d-1}} < 1.
    \]
    Therefore, the maximum value of \eqref{eq:first_function} is attained when $x = 1/2$. Under this assumption, the function becomes
    \[
        (2\eps_0)^{1/2+y+z} \mleft(\frac{\eps_0 + \eps_1}{2\eps_0}\mright) \mleft(\eps - \eps_0\mright)^{1/2} \mleft(\frac{\alpha(\delta) \cdot (\eps_1-\eps)}{y}\mright)^{y/2}(d - \eps_0)^z.
    \]
    
    Second, when $y$ and $z$ are fixed, the above function is proportional to $(\eps-\eps_0)^{1/2}(\eps_1-\eps)^{y/2}$, which attains its maximum when $\eps = (y\eps_0+\eps_1)/(y+1)$. Under this assumption, the function becomes
    \begin{equation}\label{eq:second_function}
        (2\eps_0)^{1/2+y+z} \mleft(\frac{\eps_0 + \eps_1}{2\eps_0}\mright) \mleft(\frac{\eps_1-\eps_0}{y+1}\mright)^{1/2} \mleft(\frac{\alpha(\delta) \cdot (\eps_1-\eps_0)}{y+1}\mright)^{y/2}(d - \eps_0)^z.
    \end{equation}
    
    Third, when $z$ is fixed, the above function is proportional to \[
        (2\eps_0)^y\mleft(\frac{\alpha(\delta)\cdot (\eps_1-\eps_0)}{y+1}\mright)^{(y+1)/2}
    \]
    One checks that if $a^2 b > 3e/2$, then the function $y \mapsto a^{y}(b/(y+1))^{(y+1)/2}$ is increasing on $[0, 1/2]$. Since $(2\eps_0)^2 \cdot \alpha(\delta)\cdot (\eps_1-\eps_0) \ge 4(d-1)/5 \ge 57$ by \eqref{eq:delta_constraint}, the maximum value of \eqref{eq:second_function} is attained when $y = 1/2-z$. Under this assumption, the function becomes
    \begin{equation}\label{eq:third_function}
        (\eps_0 + \eps_1) \mleft(\frac{\eps_1-\eps_0}{3/2-z}\mright)^{1/2} \mleft(\frac{\alpha(\delta) \cdot (\eps_1-\eps_0)}{3/2-z}\mright)^{1/4-z/2}(d - \eps_0)^z.
    \end{equation}

    Fourth, the above function is proportional to
    \[
        \frac{(d - \eps_0)^z}{(3/2-z)^{3/4-z/2}}.
    \]
    One checks that if $a > e^{-1/2}$, then the function $z \mapsto a^z/(3/2-z)^{3/4-z/2}$ is increasing on $[0, 1/2]$. Since $d - \eps_0 \ge 60$ for $d \ge 75$ and $1/(\delta - \eps_0)^2 \le 1/2$ by \eqref{eq:delta_constraint}, the maximum value of \eqref{eq:third_function} is attained when $z = 1/(\delta - \eps_0)^2 =: z^*$. Under this assumption, the function becomes
    \begin{equation}\label{eq:fourth_function}
        (\eps_0 + \eps_1) \mleft(\frac{\eps_1-\eps_0}{3/2-z^*}\mright)^{1/2} \mleft(\frac{\alpha(\delta) \cdot (\eps_1-\eps_0)}{3/2-z^*}\mright)^{1/4-z^*/2}(d - \eps_0)^{z^*}.
    \end{equation}
    
    Suppose that $75 \le d \le 139$. We choose $\delta \in (\eps_0, d)$ so that $\alpha(\delta) \cdot (\eps_1 - \eps_0) = 1/4$, which is a quadratic equation in $\delta$. Solving this equation gives $\delta = \sqrt{(d + \eps_0)(d-7\eps_0)} - \eps_0$. One checks numerically for every $d \in \sset{75, \dots, 139}$ that \eqref{eq:delta_constraint} is satisfied and \eqref{eq:fourth_function} is strictly less than $1$.

    Suppose that $d \ge 140$. We choose $\delta \in (\eps_0, d)$ so that $\alpha(\delta) \cdot (\eps_1 - \eps_0) = 1/5$, which is a quadratic equation in $\delta$. Solving this equation gives $\delta = \sqrt{(d + \eps_0)(d-9\eps_0)} - \eps_0$. One checks that
    \begin{equation}\label{eq:delta_inequality}
        \delta - \eps_0 \ge d / 3 \text{ for } d \ge 140,
    \end{equation}
    and so \eqref{eq:delta_constraint} is satisfied. Since $\alpha(\delta) \cdot (\eps_1 - \eps_0) = 1/5$, the maximum value of \eqref{eq:fourth_function} simplifies to
    \begin{multline*}
        (\eps_0 + \eps_1) \mleft(\frac{\eps_1-\eps_0}{3/2-z^*}\mright)^{1/2} \mleft(\frac{1}{15/2-5z^*}\mright)^{1/4-z^*/2}(d - \eps_0)^{z^*} \\
        = \frac{\eps_0 + \eps_1}{\sqrt{d+\eps_0}} \cdot \frac{\sqrt{5}}{(15/2-5z^*)^{3/4-z^*/2}} \cdot \exp\mleft(\frac{\ln(d-\eps_0)}{(\delta-\eps_0)^2}\mright).
    \end{multline*}
    We estimate the three factors in the last expression separately. First, we have
    \[
        \frac{\eps_0 + \eps_1}{\sqrt{d+\eps_0}} \le \frac{2\sqrt{d}+1/d}{\sqrt{d}} = 2 + 1/\mleft(d\sqrt{d}\mright) \le 2 + 1/444.
    \]
    Second, since $z^* = 1/(\delta-\eps_0)^2 \le 9/d^2 \le 1/2000$ via \eqref{eq:delta_inequality}, we have
    \[
        \frac{\sqrt{5}}{(15/2-5z^*)^{3/4-z^*/2}} \le \frac{\sqrt{5}}{(15/2-1/400)^{3/4-1/4000}} \le \frac{1}{2+1/40}.
    \]
    Third, since $d \mapsto \ln d / d^2$ is a decreasing function on $(\sqrt{e}, \infty)$, we have that
    \[
        \exp\mleft(\frac{\ln(d-\eps_0)}{(\delta-\eps_0)^2}\mright) \stackrel{\eqref{eq:delta_inequality}}{\le} \exp\mleft(\frac{9\ln d}{d^2}\mright) \le \exp\mleft(\frac{9\ln 140}{140^2}\mright) \le \frac{111}{110}.
    \]
    Therefore, the product of the three factors is at most
    \[
        \frac{2+1/444}{2+1/40} \cdot \frac{111}{110} = \frac{889}{891}
    \]
    which is strictly less than $1$.
\end{proof}

\end{document}